\newif\ifnotarxiv
\notarxivfalse
\documentclass{amsart}
\newtheorem{theorem}{Theorem}
\newtheorem{lemma}{Lemma}
\newtheorem{example}{Example}
\newtheorem{corollary}{Corollary}
\author{Kristian Debrabant and Anne Kv{\ae}rn{\o} and Nicky Cordua Mattsson}
\address[Kristian Debrabant]{Department of Mathematics and Computer Science, University of Southern Denmark, 5230 Odense M, Denmark}
\email{debrabant@imada.sdu.dk}
\address[Anne Kv{\ae}rn{\o}]{Department of Mathematical Sciences, Norwegian University of Science and Technology, 7491 Trondheim, Norway}
\email{anne.kvarno@math.ntnu.no}
\address[Nicky Cordua Mattsson]{Department of Mathematics and Computer Science, University of Southern Denmark, 5230 Odense M, Denmark}
\email{nickymattsson@icloud.com}
\title[Lawson schemes for highly oscillatory SDEs]{Lawson schemes for highly oscillatory stochastic differential equations and conservation of invariants}
\newcommand{\subclass}[1]{\subjclass{#1}}
\newif\ifUpdateFigures\UpdateFigurestrue
\usepackage[numbers,sort&compress]{natbib}
\usepackage{newtxtext,newtxmath}
\usepackage{helvet}
\usepackage{courier}

\usepackage{graphicx}
\makeatletter
\setlength\abovecaptionskip{\f@size\p@}
\setlength\belowcaptionskip{0\p@}
\long\def\@makecaption#1#2{%
  \vskip\abovecaptionskip
  \sbox\@tempboxa{#1: #2}%
  \ifdim \wd\@tempboxa >\hsize
    #1: #2\par
  \else
    \global \@minipagefalse
    \hb@xt@\hsize{\hfil\box\@tempboxa\hfil}%
  \fi
  \vskip\belowcaptionskip}
\makeatother

\usepackage{subcaption}

\usepackage{float}

\usepackage{enumerate}

\usepackage{amsmath}
\usepackage{mathtools}
\usepackage{amsfonts}

\usepackage{algorithm}
\usepackage{algorithmic}

\usepackage{pgfplots}
\usepgfplotslibrary{fillbetween}
\pgfplotsset{compat=newest,every axis/.append style={legend style={font=\tiny}},filter discard warning=false}
\usepackage{pgfplotstable}
\ifUpdateFigures\else\renewcommand{\pgfplotstableread}[3][]{}\fi
\usepgfplotslibrary{external}
\ifUpdateFigures\tikzexternalize[prefix=tikzextfigures/]\else\tikzexternalize[up to date check=simple]\fi

\usepackage{todonotes,soul,csquotes}\setlength{\marginparwidth}{25mm}

\makeatletter
\renewcommand{\todo}[2][]{\tikzexternaldisable \@todo[#1]{#2}\tikzexternalenable}
\makeatother

\makeatletter
\if@todonotes@disabled

\else

\fi
\makeatother

\usepackage{scrtime}

\usepackage{hyperref}

\usepackage[capitalize]{cleveref}

\newtheorem{assumption}{Assumption}

\Crefname{assumption}{Assumption}{Assumptions}
\Crefname{example}{Example}{Examples}
\crefname{SDE}{the SDE}{SDEs}
\crefname{equation}{}{}
\creflabelformat{SDE}{(#2#1#3)}


\newcommand{\gh}{{\hat{g}}}
\newcommand{\gb}{\bar{g}}

\newcommand{\It}{\tilde{I}}
\newcommand{\R}{\mathbb{R}}
\newcommand{\N}{\mathbb{N}}

\newcommand{\dmath}{\mathrm{d}}

\newcommand{\dt}{\dmath t}

\newcommand{\dW}{\dmath W}
\newcommand{\dX}{\dmath X}
\newcommand{\dXlin}{\dmath \Xlin}
\newcommand{\dV}{\dmath V}

\newcommand{\I}{\mathcal{I}}

\DeclareMathOperator{\E}{E}
\newcommand{\tspec}{\tilde{t}}
\newcommand{\Xlin}{\hat{X}}

\newcommand\LegendImage[1]{
\draw[%
                /pgfplots/mesh=false,%
                bar width=3pt,%
                bar shift=0pt,%
                mark repeat=2,%
                mark phase=2,#1]
                plot coordinates {
                    (0cm,0cm)
                    (0.3cm,0cm)
                    (0.6cm,0cm)%
                };
}

\newenvironment{customlegend}[1][]{%
        \begingroup
        \csname pgfplots@init@cleared@structures\endcsname
        \pgfplotsset{#1}%
    }{%
        \csname pgfplots@createlegend\endcsname
        \endgroup
    }%
\def\addlegendimage{\csname pgfplots@addlegendimage\endcsname}

\pgfplotsset{TDSLstyle/.style={mark=o,red}}
\pgfplotsset{TFSLstyle/.style={mark=square,green}}
\pgfplotsset{MDSLstyle/.style={mark=x,brown}}
\pgfplotsset{MFSLstyle/.style={mark=triangle,blue}}
\pgfplotsset{Mstyle/.style={mark=diamond}}

\pgfplotsset{
    legend image with MDSL MFSL Midpoint overlaid/.style={
        legend image code/.code={%
            \LegendImage{/pgfplots/MFSLstyle}
            \LegendImage{/pgfplots/MDSLstyle}
             \LegendImage{/pgfplots/Mstyle}
        }
    },
}

\begin{document}
\captionsetup{width=0.9\linewidth}

\ifnotarxiv\maketitle\fi
\begin{abstract}
{In this paper, we consider a class of stochastic midpoint and trapezoidal Lawson schemes for the numerical discretization of highly oscillatory stochastic differential equations. These Lawson schemes incorporate both the linear drift and diffusion terms in the exponential operator. We prove that the midpoint Lawson schemes preserve quadratic invariants and discuss this property as well for the trapezoidal Lawson scheme. Numerical experiments demonstrate that the integration error for highly oscillatory problems is smaller than {that} of some standard methods.
}
\keywords{Stochastic Lawson;  Quadratic invariants; Stochastic oscillators; Highly oscillatory problems; Numerical schemes.}
\subclass{60-08 \and 65C30 \and 60H10 \and 34C15 \and 60H35}
\end{abstract}

\maketitle

\section{Introduction}\label{sec:intro}

In this paper, we consider Stratonovich stochastic differential equations (SDEs) in which the drift and
diffusion terms can be split into  linear and  non-linear parts,
\begin{equation}\label[SDE]{eq:SDEOrig}
  \dX (t) =\left(A_0X(t) + g_0(X(t))\right)\dt+ \sum_{m=1}^M \left(A_mX(t) +
  g_m(X(t))\right)\circ\dW_m(t), ~X(t_0) =X_{0},
\end{equation}
where $t\in I$, $W_m(t)$, $m=1,\dotsc,M$, denote independent, one-dimensional Wiener processes, and
the SDE is solved on the interval $I=[t_0,T]$. We assume that \cref{eq:SDEOrig} has a unique solution for $X_0\in\mathbb{R}^{d}$ and that $g_m\in C^{1}(\mathbb{R}^d,\mathbb{R}^d)$.
To simplify the notation, we define $W_0(t)=t$, so that \labelcref{eq:SDEOrig} can be written as
\begin{equation} \label[SDE]{equ:sde}
  \dX (t) = \sum_{m=0}^M \left(A_mX(t) +
  g_m(X(t))\right)\circ\dW_m(t), \quad t\in I, \quad
  X(t_0)=X_{0}.
\end{equation}
We will also assume that the matrices $A_m \in \mathbb{R}^{d\times d}$, $m=0,\dotsc,M$, are constant, and moreover chosen such that the following commutativity assumption is
satisfied:
\begin{assumption}[Commutativity]\label{ass:commute}
  \[ [A_l,A_k] =A_lA_k - A_kA_l = 0  \qquad \text{for all} \qquad l,k = 0,1,\dots,M. \]
\end{assumption}%
To satisfy this assumption, it is sometimes convenient to split the linear parts of the problem, and
let some of it be included in the $g_m$ functions.

{Applications satisfying \cref{ass:commute} are, e.\,g., \cite{erdogan19anc} the FitzHugh–Nagumo equation with multiplicative noise, the Lotka-Volterra system and SDEs resulting from spectral spatial discretization of stochastic partial differential equations (SPDEs) with diagonal noise.}

Exponential methods for solving such problems have in particular been applied in the
SPDE setting, mostly, but not exclusively for problems with
additive noise. \citet{cohen12otn} proposed an exponential method for stochastic oscillators, \citet{yang19sps} suggested one for damped Hamiltonian systems.
 Recently, \citet{erdogan19anc} presented a quite general approach for constructing exponential integrators for
\labelcref{equ:sde} with multiplicative noise. One of the strategies presented there is an adaptation of
a method introduced for ordinary differential equations (ODEs) by \citet{lawson67grk} to SDEs
of the form \labelcref{equ:sde}. The idea is to transform the system by the fundamental solution of
the linear part, solve the transformed system by a scheme of preference, and then transform back
again. In the ODE literature, this is also referred to as an integrating factor method (\citet{maday90aoi,cox02etd}).
This is the procedure which will be applied in this paper, and which is described in detail in \cref{sec:Lawsons}. We are essentially interested in studying highly oscillatory problems, and to show that the
Lawson methods can attain good accuracy with larger step sizes than what can be obtained by standard
stochastic methods. We will also show that the Lawson methods can, under reasonable
assumptions, maintain conservation properties of the underlying scheme. Let us demonstrate the ideas
of the paper by an introductory example.

\begin{example}[The non-linear Kubo oscillator]\label{ex:nonlinkubo}
  As a starting point, consider the linear Kubo oscillator
    \begin{equation}
    \dX(t) = \begin{pmatrix}
      0 & -\omega \\ \omega & 0
    \end{pmatrix} X(t) \dt + \begin{pmatrix}
      0 & -\sigma \\ \sigma & 0
    \end{pmatrix} X(t) \circ \dW(t)
  \end{equation}
  which models a simple oscillator perturbed by a stochastic term and appears in nuclear magnetic
  resonance and molecular spectroscopy (\citet{kubo63sle,goychuk04qdw}). The exact solution of this problem starting at $(\tspec,X(\tspec))$ for $\tspec \in I$ is given by
  \[
    X(t) = e^{L^{\tspec}(t)}X(\tspec),
  \]
  where the fundamental solution $e^{L^{\tspec}(t)}$ is a rotation matrix,
  \[
    e^{L^{\tspec}(t)} =
    \begin{pmatrix}
      \cos{\alpha^{\tspec}(t)} & -\sin{\alpha^{\tspec}(t)} \\ \sin{\alpha^{\tspec}(t)} & \cos{\alpha^{\tspec}(t)}
    \end{pmatrix}
    \text{ with } \alpha^{\tspec}(t) = \omega (t-\tspec)+\sigma (W(t)-W(\tspec)).
  \]
  Clearly, the exact solution $X(t)=(X_1(t),X_2(t))^\top$ of the linear problem is norm-preserving, i.\,e.\
  satisfies the invariant
  \begin{equation} \label{equ:KuboInvar}
    \I (X(t)) = X_1^2(t)+X_2^2(t) = \text{constant} \quad \text{for all } t.
  \end{equation}
  \citet{cohen12otn} proposed an extension to the Kubo oscillator by including a non-linear,
  skew-symmetric drift term, see also \citet{laurent20mif}.
  We extend this further, with non-linear terms in both the drift and diffusion, in addition to
  including multidimensional noise:
  \begin{equation}\label{equ:NonLinKubo}
    \dX(t) = \sum_{m=0}^M \left[ \omega_m \begin{pmatrix}
	0 & -1 \\ 1 & 0
      \end{pmatrix} X + \begin{pmatrix}
	0 & -U_m(X) \\ U_m(X) & 0
    \end{pmatrix} X(t) \right] \circ\dW_m(t)
  \end{equation}
  with $U_m:\mathbb{R}^2 \to \mathbb{R}$ and $\omega_m \in \mathbb{R}$. The solutions of these
  problems all preserve the invariant \eqref{equ:KuboInvar}, see \cref{sec:QuadraticInv} for details.
  We are interested in studying to which {extent} a numerical approximation will be
  able to follow the fast oscillations of the linear parts, as well as how well the
  invariant \eqref{equ:KuboInvar} is preserved. In this example, the following three methods (described in
  detail in \cref{sec:Lawsons}) have been applied to the SDE \eqref{equ:NonLinKubo}:

      \begin{itemize}
    \item The standard implicit stochastic midpoint rule (\enquote{Midpoint}), which is known to preserve quadratic invariants (\citet{milstein02nmf,hong15poq}).
    \item The method proposed by \citet{cohen12otn} (\enquote{TDSL}) for highly oscillatory SDEs.
      This is a drift Lawson scheme based on the trapezoidal rule,
      but it does not preserve the invariant for the non-linear problem.
    \item A Lawson scheme based on the implicit midpoint rule (\enquote{MFSL}). Details of this scheme are given in \cref{sec:Lawsons},
      and in \cref{sec:QuadraticInv} it is proved that this scheme preserves the quadratic invariant $\I$.
  \end{itemize}

  In our example we will use $M=2$ and
  \begin{equation*}
    U_0(X) = \frac{1}{5}(X_1 + X_2)^5, ~ U_1(X)=0, ~ U_2(X)=\frac{1}{3}(X_1+X_2)^3,~ \omega_0=
    10, ~ \omega_1=10, ~ \omega_2=0,
  \end{equation*}
  and the SDE is integrated from 0 to 1, using step size $h=2^{-5}$.

\pgfplotstableread[col sep=comma]{Data/Intro/RefOmega10Sigma10Start.csv}\refF
\pgfplotstableread[col sep=comma]{Data/Intro/h2-4Omega10Sigma10Start.csv}\intro

\ifUpdateFigures
\pgfplotstablegetrowsof{\refF}
\pgfmathtruncatemacro{\rownumL}{\pgfplotsretval}

\pgfplotstablegetrowsof{\intro}
\pgfmathtruncatemacro{\rownumS}{\pgfplotsretval}
\fi
\begin{figure}[ht]
\begin{minipage}{0.3333\linewidth}
\begin{tikzpicture}
\begin{axis}[%
	name=1,
	clip=true,
	clip bounding box=upper bound,
	hide axis,
    width=\textwidth,
    height=0.3\textheight,
    xmin=-1.1, xmax=1.1,
    ymin=-1.1, ymax=1.1,
    title={Midpoint}
    ]

    \addplot3[%
    opacity = 0.02,
    fill opacity=0.5,
    mesh/interior colormap name=hot,
    surf,
    colormap/hot,
    faceted color=black,
    z buffer = sort,
    samples = 50,
    variable = \u,	
    variable y = \v,
    domain = 0:360,
    y domain = 0:1,
    ]
    ({cos(u)}, {sin(u)}, {v});

    \addplot3[mark=o,blue,mark indices={\rownumL},mark options={draw=blue, ultra thick, scale=2, fill}] table[x={MFSL1}, y ={MFSL2}, z ={t}]{\refF};
    \addplot3[only marks,mark=triangle*,red] table[x={Midpoint1}, y ={Midpoint2}, z ={t}]{\intro};
	\addplot3[only marks,mark indices={\rownumS},mark=o,mark options={draw=red, ultra thick, scale=2, fill}] table[x={Midpoint1}, y ={Midpoint2}, z ={t}]{\intro};
    \end{axis}
    \draw[|->] (3,0.25) -- (3,4.5) node[pos=1,right]{$t$};
\end{tikzpicture}
\end{minipage}%
\begin{minipage}{0.3333\linewidth}
\begin{tikzpicture}
\begin{axis}[%
	name=plot1,
	clip=true,
	clip bounding box=upper bound,
	hide axis,
    width=\textwidth,
    height=0.3\textheight,
    title={TDSL}
    ]

	\addplot3[%
    opacity = 0.02,
    fill opacity=0.5,
    mesh/interior colormap name=hot,
    surf,
    colormap/hot,
    faceted color=black,
    z buffer = sort,
    samples = 50,
    variable = \u,
    variable y = \v,
    domain = 0:360,
    y domain = 0:1,
    ]
    ({cos(u)}, {sin(u)}, {v});

    \addplot3[mark=o,blue,mark indices={\rownumL},mark options={draw=blue,ultra thick, scale=2, fill}] table[x={MFSL1}, y ={MFSL2}, z ={t}]{\refF};
    \addplot3[only marks,mark=triangle*,red] table[x={TDSL1}, y ={TDSL2}, z ={t}]{\intro};
    \addplot3[only marks,mark indices={\rownumS},mark=o,mark options={draw=red,ultra thick, scale=2, fill}] table[x={TDSL1}, y ={TDSL2}, z ={t}]{\intro};

    \end{axis}
    \draw[|->] (3,0.25) -- (3,4.5) node[pos=1,right]{$t$};
\end{tikzpicture}	
\end{minipage}%
\begin{minipage}{0.3333\linewidth}
	\begin{tikzpicture}
\begin{axis}[%
	name = plot1,
	clip=true,
	clip bounding box=upper bound,
	hide axis,
    width=\textwidth,
    height=0.3\textheight,
    xmin=-1.1, xmax=1.1,
    ymin=-1.1, ymax=1.1,
    title={MFSL}
    ]

	\addplot3[%
    opacity = 0.02,
    fill opacity=0.5,
    mesh/interior colormap name=hot,
    surf,
    colormap/hot,
    faceted color=black,
    z buffer = sort,
    samples = 50,
    variable = \u,
    variable y = \v,
    domain = 0:360,
    y domain = 0:1,
    ]
    ({cos(u)}, {sin(u)}, {v});

    \addplot3[mark=o,blue,mark indices={\rownumL},mark options={draw=blue, ultra thick, scale=2, fill}] table[x={MFSL1}, y ={MFSL2}, z ={t}]{\refF};
    \addplot3[only marks,mark=triangle*,red] table[x={MFSL1}, y ={MFSL2}, z ={t}]{\intro};
	\addplot3[only marks,mark indices={\rownumS},mark=o,mark options={draw=red, ultra thick, scale=2, fill}] table[x={MFSL1}, y ={MFSL2}, z ={t}]{\intro};
    \end{axis}
    \draw[|->] (3,0.25) -- (3,4.5) node[pos=1,right]{$t$};
\end{tikzpicture}
\end{minipage}%
\caption{Numerical trajectory of the non-linear Kubo oscillator. Blue: reference solution, blue circle: value at $t=1$. Red triangles: numerical approximation, red circle: value at $t=1$.\label{fig:IntroExample}}%
\end{figure}

The numerical results are presented in \cref{fig:IntroExample}. According to this, the implicit midpoint rule (\enquote{Midpoint}) preserves the invariant,
  it is however not able to resolve the fast oscillations. The method proposed by \citet{cohen12otn} (\enquote{TDSL}) resolves the oscillations well, but the solution drifts away from the manifold. The Lawson midpoint rule (\enquote{MFSL}) both stays on the manifold and resolves the high
  oscillations.
  This example is further exploited in \cref{sec:numEx}.
\end{example}
The outline of this paper is as follows: In \cref{sec:Lawsons} we derive the midpoint and trapezoidal stochastic Lawson schemes.
In \cref{sec:PresInv} it is proved that, under reasonable assumptions, the Lawson transformation preserves linear and
quadratic invariants, and consequently, if the underlying scheme preserves such invariants, so will the
corresponding Lawson scheme. \Cref{sec:numEx} is devoted to numerical experiments.

\section{Stochastic Lawson schemes} \label{sec:Lawsons}
In this section we will shortly outline how to derive the stochastic Lawson (SL) schemes, with particular emphasis
on the implicit midpoint and the trapezoidal Lawson scheme.

Consider the linear SDE
\begin{equation}\label[SDE]{equ:linSDE}
  \dXlin(t) = \sum_{m=0}^M A_m \Xlin(t) \circ \dW_m(t)
\end{equation}
and let
\[W^{\tspec}(t) = ( W_m(t)-W_m(\tspec))_{m=0}^M\qquad\text{ and }\qquad L{(W^{\tspec}(t))} = \sum_{m=0}^M A_m W_m^{\tspec}(t).\]
Under the
commutativity condition (\cref{ass:commute}), $e^{L{(W^{\tspec}(t))}}$ and $A_m$ commute for $m=0,\dotsc,M$,
and the exact solution of \cref{equ:linSDE} through the
point $(\tspec,\Xlin(\tspec))$ is given by {(see }\citet[Chapter 8.5]{arnold74sde}{)}
\begin{equation}
  \Xlin(t) = e^{L{(W^{\tspec}(t))}}\Xlin(\tspec).
  \label{equ:linsde}
\end{equation}
This can easily be verified by the chain rule for Stratonovich integrals {(see e.g.} \citet[Chapter III,
Theorem 2.1]{ikeda89sde}{)},
\begin{align*}
\dXlin(t)&=e^{L{(W^{\tspec}(t))}}\sum_{m=0}^M A_m\circ\dW_m(t)\Xlin(\tspec)\\
& =\sum_{m=0}^M A_me^{L{(W^{\tspec}(t))}}\Xlin(\tspec)\circ\dW_m(t)=\sum_{m=0}^M A_m\Xlin(t)\circ\dW_m(t).
\end{align*}
We will now outline the procedure for constructing Lawson schemes. Let discretization points $t_0<t_1<\dots<t_N=T$ be given and denote the approximations by the Lawson scheme  at time $t_n$ by $Y_n$, $n=0,\dots,N$.
Start from a point $(t_n,Y_n)$ and consider the locally transformed variable
\begin{equation}
\label{equ:TransformedVar}
V^n(t) = e^{-L^{n}(t)} X(t) \qquad \text{and thus} \qquad X(t) = e^{L^n(t)}V^n(t),
\end{equation}
where $L^n(t)=L{{(W^{t_n}(t))}}$.
Applying again the chain rule for Stratonovich integrals results under \cref{ass:commute} in
\begin{align*}
  \dV^n(t)
  &=e^{-L^{n}(t)}\left(-\sum_{m=0}^{M}A_m\circ\dW_m(t)X(t)+\sum_{m=0}^M \left(A_mX(t) +
  g_m(X(t))\right)\circ\dW_m(t) \right)\\
  &= e^{-L^n(t)}\sum_{m=0}^M g_m(X)\circ\dW_m(t),
\end{align*}
which can be written as a non-autonomous, non-linear SDE in the transformed variable $V^n(t)$,
\begin{align}
\dV^n(t) &= \sum_{m=0}^M e^{-L^n(t)}g_m(e^{L^n(t)}V^{n}(t)) \circ \dW_m(t) \nonumber \\
         &=\sum_{m=0}^M\gh_m(W^n(t),V^n(t)) \circ\dW_m(t), \qquad  V^n(t_n)= V^n_n = Y_n,
  \label[SDE]{equ:TransformedSDE}
\end{align}
where $W^n = W^{t_n}$ and $\gh_m({y},V^n(t))=e^{-L{(y)}}g_m(e^{L{(y)}}V^{n}(t))$. One step from $(t_n,Y_n)$ to $(t_{n+1},Y_{n+1})$ with a Lawson scheme is now just one step of some
appropriate one-step method applied to {the autonomous version
\begin{equation}\label{equ:autonomousSDE}
    \dmath \bar{V}^n(t) = \sum_{m=0}^M \gb_m(\bar{V}^n(t)) \star \dW_m(t), \quad \gb_m = (\delta_{0,m},\ldots,\delta_{M,m},{\gh^{\top}_m})^{\top}
\end{equation}
of }the transformed system \labelcref{equ:TransformedSDE}{, with $\bar{V}^n(t) = (W^n(t)^\top,{V^n}(t)^\top)^{\top}$ and $\delta_{i,j} = \begin{cases}
        1, & i=j, \\
        0, & i\neq j,
    \end{cases}$} giving $V^n_{n+1}$, followed by a back-transformation
$Y_{n+1}=e^{L^n(t_{n+1})}V^n_{n+1}$ (see \citet{debrabant21rkl} for details).
Due to the construction of the methods, the convergence properties of the underlying methods are
retained (\citet{debrabant21rkl}). {Specifically, we have the following theorem:
\begin{theorem}[Convergence of stochastic Lawson methods \cite{debrabant21rkl}]\label{thm:convergenceLawsonScheme}
Let \cref{ass:commute} hold, let $X$ be the solution of SDE \eqref{equ:sde}, $Y_n$ be the result of the stochastic Lawson method ($n=0,\dots,N$), $\bar{V}^0$ be the exact solution of \cref{equ:autonomousSDE} (with $n=0$), and $\bar{V}^0_{n}=({W^0_n}^\top,{V^0_n}^\top)^{\top}$ for $n=0,\dots,N$ be its approximation obtained by applying the underlying SRK method {with step sizes $h_i=t_{i}-t_{i-1}$, $i=1,\dots,N$. Further, let $h_{N,max}=\max_{i=1}^Nh_i$}.
\begin{enumerate}
\item Assume that $W^0_n=W^0(t_n)$ and that the underlying SRK method is of mean square order $p$, i.\,e.\ there exists a $c\in \mathbb{R}$ such that for all $N\in\N$ and all $n\in\{0,1,\dots,N\}$ it holds that $\sqrt{{\E}( \|{V}^0_n - {V}^0(t_n)\|_2^2)} \leq ch_{{N,max}}^p$.
Then the stochastic Lawson method is strong convergent of order $p$, i.\,e., there exists a $\tilde{c} \in \mathbb{R}$ such that for all $N\in\N$ and all $n\in\{0,1,\dots,N\}$ it holds {that}
    \begin{equation}
        {\E}\|Y_n - X(t_n)\|_2 \leq \tilde{c}h_{{N,max}}^p.
    \end{equation}
\item
Assume that
\begin{enumerate}[(a)]
\item $A_m$ for $m> 0$ are skew-symmetric (cmp.\ \cref{ass:ABskew} below),
\item $\bar{V}^0_n$ is of weak order $\tilde{p}$, i.e.\ for all $\tilde{f}\in C^{2(\tilde{p}+1)}_P(\R^{M+1}\times\R^{d},\R)$ there exists a $c\in \mathbb{R}$ such that for all $N\in\N$ and all $n\in\{0,1,\dots,N\}$ it holds that $|{\E}(\tilde{f}(\bar{V}^0_n) - \tilde{f}(\bar{V}^0(t_n))| \leq ch_{{N,max}}^{\tilde{p}}$,
\item $W^0_{n,0}=W_0(t_n)=t_n$.
\end{enumerate}
Then the stochastic Lawson method is of weak order $\tilde{p}$, i.\,e., for all $f\in C^{2(\tilde{p}+1)}_P{(\R^{d},\R)}$ there exists some constant $c_f > 0$ such that for all $N\in\N$ and all $n\in\{0,1,\dots,N\}$ it holds
\begin{equation}
         | {\E} f(Y_{{n}}) - {\E}f(X(t_{{n}}))| \leq c_{{f}} h_{{N,max}}^{\tilde{p}}.
\end{equation}
\end{enumerate}
\end{theorem}
}

In this paper, only Lawson schemes based on the implicit trapezoidal and midpoint rules will
be discussed. These two methods, applied to {\cref{equ:autonomousSDE}}, are given by
\begin{align}
  V_{n+1}^n &= V^n_n + \sum_{m=0}^M \bigg(\gh_m({0},V^n_n)+
  \gh_m(\Delta W^n,V^n_{n+1})\bigg) \frac{\Delta W^n_m}{2}, \label{eq:Trapezoidalrule} \\
  \intertext{respectively}
  V^n_{n+1} &= V^n_n + \sum_{m=0}^M\label{eq:Midpointrule}
    \gh_m\left({\frac{\Delta W^n}{2}},\frac{V^n_{n}+V^n_{n+1}}{2} \right) \Delta
    W^n_m,
\end{align}
where the stochastic increments are $\Delta W_m^n = W_m(t_{n+1})-W_m(t_n)$, $m=0,\dotsc,M$, {and we used that $W^n(t_n)=0\in\R^{M+1}$ and $W^n(t_{n+1})=\Delta W^n$.}
{Under appropriate conditions on the smoothness and boundedness of the $\gh_m$ (e.\,g.\ for mean square convergence of order up to one that $\gh_0+\frac12\sum_{m=1}^M\gh_m'\gh_m$ and $\gh_1,\dots,\gh_M$ satisfy a global Lipschitz condition and are, together with all the associated elementary differentials up to order three, of linear growth), these approximations will be of mean square order $p=0.5$ ($p=1$ for commutative noise) and weak order $\tilde{p}=1$, see e.\,g.\ \citet{milstein04snf,kloeden99nso,hong15poq,debrabant08bsa}. Note that for methods that conserve a bounded manifold, smooth functions $g$ can be replaced by smooth bounded functions being zero outside a suitable ball, see e.g.\ \citet{cohen20hon}.}

Applying the back-transformation results in the two Lawson methods of
interest in this paper:
\begin{align}
  \intertext{Trapezoidal Lawson rule:}
Y_{n+1} &= e^{\Delta L^n}Y_n + \sum_{m=0}^M \bigg(e^{\Delta L^n}g_m(Y_n)+g_m(Y_{n+1}) \bigg)
\frac{\Delta W_m^n}{2}, \label{equ:LawsonTrap} \\
  \intertext{Midpoint Lawson rule:}
  Y_{n+1} &= e^{\Delta L^n}Y_n + \sum_{m=0}^M e^{\frac{\Delta L^n}{2}} g_m\left(
  \frac{e^{\frac{\Delta L^n}{2}}Y_n + e^{-\frac{\Delta L^n}{2}}Y_{n+1}}{2} \right) \Delta W^n_m,  \label{equ:LawsonMid}
\end{align}
where $\Delta L^n = L^n(t_{n+1}) = \sum_{m=0}^M A_m \Delta W_m^n$.

We would like to emphasize that there is some freedom in how to choose the matrices $A_m$ in \cref{equ:sde} and thus the Lawson methods \eqref{equ:LawsonTrap} and \eqref{equ:LawsonMid}, as for any $A_m\in \mathbb{R}^{d\times d}$ an integrand $\tilde{A}_mX + \tilde{g}_m(X)$ can be written as
\[ \tilde{A}_mX + \tilde{g}_m(X) = A_mX +  \overbrace{(\tilde{A}_m-A_m)X+\tilde{g}_m(X)}^{=:g_m(X)},
\]
see also \citet{debrabant21rkl}. The matrices $A_m$ have to be chosen such that \cref{ass:commute} is satisfied.  Different splittings lead to different schemes. The underlying
numerical scheme is restored by setting all the linear parts $A_m$ to 0 ($m=0,1,\dotsc,M$).
In a drift stochastic Lawson scheme (DSL), only the linear drift term is included in the
exponential, that is $A_0\not=0$ and $A_m=0$ for $m=1,\dotsc,M$. The
exponential $e^{\Delta L^n}=e^{h_{{n+1}}A_0}$ is computed only once, and the DSL schemes can be quite
efficient. The scheme suggested by \citet{cohen12otn} (denoted by TDSL in the present article) is an example of a DSL
scheme based on the trapezoidal rule \eqref{equ:LawsonTrap}. If  $A_m\not=0$ for at least one
$m\not=0$, the scheme is denoted as a full stochastic Lawson scheme (FSL). In this case the exponentials depend
on the stochastic increments and have to be calculated for each step, so the performance of the FSL schemes depends on how efficient this can be done.

For a continuous splitting between the linear and the non-linear part, see \citet{erdogan19anc}.

\section{Preservation of invariants} \label{sec:PresInv}
{\citet{hong15poq})} studied preservation of linear and quadratic invariants for SDEs.
In particular, they showed that the stochastic midpoint rule (as a representative for the Gauss methods),
preserves quadratic invariants. The aim of this section is to extend these results to
the stochastic Lawson (SL) schemes. It is also well known that the standard trapezoidal rule almost
preserves quadratic invariants for deterministic differential equations, see e.g.\  \citet[Example
V.4.2]{hairer06gni}. This property does unfortunately not extend to the stochastic trapezoidal rule in general; we will study under which conditions it does hold.
The commutativity condition \cref{ass:commute} is always assumed {to be} satisfied throughout this section.

In the following, we first investigate when stochastic Lawson schemes preserve quadratic invariants before briefly commenting on the preservation of linear invariants.
\subsection{Preservation of quadratic invariants} \label{sec:QuadraticInv}
In this section we study when \cref{equ:sde} for a $D\in \mathbb{R}^{d\times d}$ satisfies the quadratic invariant
\begin{equation}\label{eq:quadraticinvariant}
    \I(X(t)) := X(t)^{\top} D X(t) = \I(X(t_0)),
\end{equation}
 and the ability of the numerical schemes to preserve it. Without loss of generality we will assume that $D=D^{\top}$. For the rest of this section we will use a combination of the following assumptions:

\begin{assumption} \label{ass:ABskew}
	The matrices $A_m\in \mathbb{R}^{d\times d}$, $m=0,\dots,M$, are skew-symmetric and commute with $D$.
\end{assumption}
\begin{assumption} \label{ass:SkewSym}
    For all $X\in\mathbb{R}^d$ and $m=0,\dots,M$ we have $X^{\top} D g_m(X)=0$.
\end{assumption}
The necessity of these two assumptions can be seen by applying the chain rule for Stratonovich integrals and using the symmetry of $D$ to obtain
    \begin{align*}
      \dmath \I(X(t)) = 2\sum_{m=0}^{M} \bigg(X(t)^{\top}D A_m X + X(t)^{\top} D g_m(X)\bigg) \circ \dW_m.
\end{align*}
To preserve $\I(X(t))$ we need both terms to be zero, the first is zero if \cref{ass:ABskew} holds and the second is zero if \cref{ass:SkewSym} holds. In summary, it holds (see e.g. \citet{hong15poq}):
\begin{lemma}\label{lem:InvSDE}
    Under \cref{ass:SkewSym,ass:ABskew}, \cref{equ:sde} has a quadratic invariant, i.\,e.\ its solution $X(t)$ fulfills \cref{eq:quadraticinvariant}.
\end{lemma}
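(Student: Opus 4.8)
The plan is to differentiate the quadratic form $\I(X(t)) = X(t)^\top D X(t)$ along a solution of \cref{equ:sde} and to show that its Stratonovich differential vanishes identically. Because the chain rule for Stratonovich integrals obeys the ordinary Leibniz rule \cite{ikeda89sde}, I would start from
\[
\dmath \I(X(t)) = (\dX(t))^\top D X(t) + X(t)^\top D\, \dX(t),
\]
substitute the right-hand side of \cref{equ:sde}, and exploit the symmetry $D = D^\top$ to recognize that the two contributions are transposes of one another and hence equal scalars. This collapses the differential into the single expression
\[
\dmath \I(X(t)) = 2\sum_{m=0}^M \Bigl( X(t)^\top D A_m X(t) + X(t)^\top D g_m(X(t)) \Bigr) \circ \dW_m(t),
\]
which is exactly the formula recorded immediately before the statement.

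It then remains to show that each summand vanishes. The nonlinear contribution $X^\top D g_m(X)$ is zero for every $X$ directly by \cref{ass:SkewSym}. For the linear contribution, the key algebraic step is that a scalar equals its own transpose, so $X^\top D A_m X = (X^\top D A_m X)^\top = X^\top A_m^\top D X$, using $D^\top = D$. Invoking first the skew-symmetry $A_m^\top = -A_m$ and then the commutativity $A_m D = D A_m$, both granted by \cref{ass:ABskew}, turns the right-hand side into $-X^\top D A_m X$. Hence $X^\top D A_m X = -X^\top D A_m X$, which forces $X^\top D A_m X = 0$. With both terms shown to vanish, $\dmath \I(X(t)) = 0$, so $\I(X(t))$ is constant and equal to $\I(X(t_0))$, which is \cref{eq:quadraticinvariant}.

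The only point that genuinely requires care is the first step, namely the legitimacy of applying the ordinary product rule to the Stratonovich differential of the quadratic form; this is precisely the feature that makes the Stratonovich (rather than \ito) formulation the natural setting, and it is supplied by the Stratonovich chain rule already cited above. Everything that follows is elementary: a transpose-and-commute argument for the $A_m$-term and a direct appeal to \cref{ass:SkewSym} for the $g_m$-term, so I do not anticipate any real obstacle beyond keeping the symmetry and skew-symmetry bookkeeping straight.
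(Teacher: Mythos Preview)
Your proposal is correct and follows essentially the same approach as the paper: the paper applies the Stratonovich chain rule to obtain the identical expression $\dmath\I(X(t)) = 2\sum_{m=0}^{M}\bigl(X^\top D A_m X + X^\top D g_m(X)\bigr)\circ\dW_m$ and then observes that the two terms vanish under \cref{ass:ABskew} and \cref{ass:SkewSym}, respectively. You have merely spelled out the transpose-and-commute argument for $X^\top D A_m X = 0$ in more detail than the paper does.
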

We now prove that the invariant \eqref{eq:quadraticinvariant} is preserved under transformations \eqref{equ:TransformedVar}:
\begin{lemma}\label{lem:TransfPresInv}
    Let \cref{ass:ABskew} be fulfilled. Then it holds for $\I$ defined in \cref{eq:quadraticinvariant}  that
   \begin{equation}\label{eq:TransfPresInv}
     \I(v) = \I(e^{L^n(t)}v), \qquad \text{for all } v \in \R^d.
    \end{equation}
\end{lemma}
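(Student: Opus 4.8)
```latex
The plan is to compute directly how the quadratic form $\I(v) = v^\top D v$ transforms under
left-multiplication by the matrix exponential $e^{L^n(t)}$, and to show that \cref{ass:ABskew}
forces this form to be invariant. First I would write
\[
  \I(e^{L^n(t)}v) = \bigl(e^{L^n(t)}v\bigr)^\top D \bigl(e^{L^n(t)}v\bigr)
  = v^\top \bigl(e^{L^n(t)}\bigr)^\top D\, e^{L^n(t)} v,
\]
so that the whole claim reduces to the matrix identity $\bigl(e^{L^n(t)}\bigr)^\top D\, e^{L^n(t)} = D$.
Since $L^n(t) = \sum_{m=0}^M A_m W_m^n(t)$ is (for each fixed $t$ and $\omega$) a fixed real matrix,
it suffices to prove this identity for the exponential of a single matrix $L$ that is skew-symmetric
and commutes with $D$, which is exactly what \cref{ass:ABskew} delivers for each $A_m$ and hence,
by linearity, for the whole sum $L^n(t)$.

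The key algebraic facts I would use are these. Because each $A_m$ is skew-symmetric,
$L^n(t)^\top = -L^n(t)$, and therefore $\bigl(e^{L^n(t)}\bigr)^\top = e^{L^n(t)^\top} = e^{-L^n(t)}$.
Because each $A_m$ commutes with $D$, so does the linear combination $L^n(t)$, and consequently
$e^{L^n(t)}$ commutes with $D$ as well; this follows from the power-series definition of the
exponential, since every power $L^n(t)^k$ commutes with $D$. Substituting these two facts gives
\[
  \bigl(e^{L^n(t)}\bigr)^\top D\, e^{L^n(t)}
  = e^{-L^n(t)} D\, e^{L^n(t)}
  = D\, e^{-L^n(t)} e^{L^n(t)}
  = D,
\]
where the middle step moves $D$ past the exponential using commutativity and the last step uses
$e^{-L^n(t)}e^{L^n(t)} = \mathbb{I}$. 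Combining with the opening display yields
$\I(e^{L^n(t)}v) = v^\top D v = \I(v)$, as required.

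I do not expect any serious obstacle here, since the statement is essentially a restatement of the
classical fact that exponentials of skew matrices are orthogonal transformations, adapted to a
weighted inner product defined by $D$. The only point that needs a little care is justifying that
$D$ commutes with the exponential rather than merely with the generator $L^n(t)$; this is the step
most likely to be glossed over, so I would make it explicit via the power series. It is also worth
noting that the argument is entirely pointwise in $\omega$ and $t$: $L^n(t)$ is a random matrix, but
for the algebraic identity above we only ever treat it as a fixed matrix satisfying the two
structural properties, so no stochastic machinery is involved beyond the definition of $L^n(t)$.
```
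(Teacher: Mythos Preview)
Your proof is correct and follows essentially the same approach as the paper: both arguments use skew-symmetry of the $A_m$ to obtain $(e^{L^n(t)})^\top = e^{-L^n(t)}$ and then commute $D$ past the exponential using that each $A_m$ commutes with $D$. You are somewhat more explicit than the paper in justifying the commutation of $D$ with $e^{L^n(t)}$ via the power series, but the substance is identical.
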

\begin{proof}
By \eqref{eq:quadraticinvariant} and using that $A_m$, $m=0,\dots,M$, are skew-symmetric (by \cref{ass:ABskew}) and $e^{L^n(t)}$ thus orthogonal, we obtain
\begin{align*}
    \I(e^{L^n(t)}v)= v^{\top} e^{-L^n(t)} D e^{L^n(t)} v
\end{align*}
which, using that all $A_m$ commute with $D$ (by \cref{ass:ABskew}) simplifies to
\begin{align*}
    \I(e^{L^n(t)}v)= v^{\top} D v = \I(v).
\end{align*}
\end{proof}

We can then prove that the transformed system preserves the same invariant as the original one:
\begin{lemma}\label{lem:InvTransSDE}
    Let \cref{ass:commute,ass:SkewSym,ass:ABskew} hold. Then the transformed system \eqref{equ:TransformedSDE} with solution $V^n(t)$ preserves the same invariant as the original system \eqref{equ:sde} with solution $X(t)$, i.e.\  \begin{equation}\label{eq:InvTransfSDE}
        \I(V^n(t)) = \I(X(t))=\I(X(t_0)).
    \end{equation}
\end{lemma}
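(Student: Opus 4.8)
The plan is to chain together the two results already established, \cref{lem:TransfPresInv} and \cref{lem:InvSDE}, using the explicit change of variables \eqref{equ:TransformedVar}. The key observation is that the transformed solution and the original solution are related pointwise in time by the factor $e^{L^n(t)}$, which under \cref{ass:ABskew} is orthogonal and commutes with $D$; since $\I$ is invariant under this factor, the invariant is transported between the two coordinate systems with no loss.

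First I would record the exact relation $X(t) = e^{L^n(t)} V^n(t)$ from \eqref{equ:TransformedVar}. This identity is meaningful precisely because \cref{ass:commute} guarantees that $e^{L^n(t)}$ is the fundamental solution of the linear part, so that the back-transformation defining $V^n(t)$ is well posed. Next I would apply \cref{lem:TransfPresInv} with the vector $v = V^n(t)$: under \cref{ass:ABskew} it yields $\I(V^n(t)) = \I(e^{L^n(t)} V^n(t))$. Substituting the relation from the first step identifies the right-hand side as $\I(X(t))$, giving $\I(V^n(t)) = \I(X(t))$.

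Finally, \cref{lem:InvSDE}, whose hypotheses \cref{ass:SkewSym,ass:ABskew} are among the assumptions of the present statement, yields $\I(X(t)) = \I(X(t_0))$, and concatenating the three equalities produces \eqref{eq:InvTransfSDE}. I do not expect a genuine obstacle here: the argument is essentially a direct composition of the two preceding lemmas. The one point that deserves care is keeping the hypotheses aligned with their roles, namely that \cref{ass:ABskew} is what powers the transport of the invariant through $e^{L^n(t)}$ in \cref{lem:TransfPresInv}, that \cref{ass:SkewSym} together with \cref{ass:ABskew} is what makes $\I$ an invariant of the original flow via \cref{lem:InvSDE}, and that \cref{ass:commute} is used only to guarantee the transformation \eqref{equ:TransformedVar} itself is valid.
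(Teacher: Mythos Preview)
Your proposal is correct and follows essentially the same approach as the paper: both arguments use the relation $X(t)=e^{L^n(t)}V^n(t)$ from \eqref{equ:TransformedVar} (valid under \cref{ass:commute}), apply \cref{lem:TransfPresInv} to equate $\I(V^n(t))$ with $\I(X(t))$, and finish with \cref{lem:InvSDE}. Your additional remarks on which assumption is used at which step are accurate and match the paper's logic.
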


\begin{proof}
By \cref{equ:TransformedVar} it holds for $V^n(t)$ that (under \cref{ass:commute})
\begin{align*}
    \I(X(t)) = \I(e^{L^n(t)}V^n(t)).
\end{align*}
\cref{lem:TransfPresInv} implies then $\I(X(t)) = \I(V^n(t)).$
By \cref{lem:InvSDE} we can conclude {that} \cref{eq:InvTransfSDE} {holds}.
\end{proof}
We are now ready to state the main result of this section:
\begin{theorem}\label{thm:NumQuadInv}
Let \cref{ass:commute,ass:SkewSym,ass:ABskew} hold and let $Y_n$ denote the discrete time approximation of \cref{equ:sde} at time point $t_n$ using an SL scheme with an underlying numerical one-step method that preserves quadratic invariants. Then the SL scheme preserves the same quadratic invariant as \cref{equ:sde}, i.e.\  \begin{equation}
        \I(Y_n) = \I(X(t))=\I(X(t_0)).
    \end{equation}
\end{theorem}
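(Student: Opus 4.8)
The plan is to exhibit one step of the Lawson scheme, from $Y_n$ to $Y_{n+1}$, as the composition of two maps, each of which leaves $\I$ unchanged: the underlying one-step method applied to the transformed system \cref{equ:TransformedSDE}, producing $V^n_{n+1}$ from $V^n_n=Y_n$, followed by the linear back-transformation $Y_{n+1}=e^{\Delta L^n}V^n_{n+1}$. Since $\I(Y_0)=\I(X(t_0))$ and, by \cref{lem:InvSDE}, the exact solution satisfies $\I(X(t))=\I(X(t_0))$ for all $t$, a one-step identity $\I(Y_{n+1})=\I(Y_n)$ will give the claim by induction on $n$.

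The crucial observation is that the vector fields of the transformed system carry $\I$ as a quadratic invariant, and moreover do so as an algebraic identity, uniformly in the (frozen) Wiener argument that the one-step method inserts. Writing $\gh_m(w,v)=e^{-L(w)}g_m(e^{L(w)}v)$ with $L(w)=\sum_{k=0}^M A_k w_k$, I would verify that $v^{\top} D\,\gh_m(w,v)=0$ for every $w$ and $v$. Indeed, under \cref{ass:ABskew} each $A_k$ is skew-symmetric and commutes with $D$, so $L(w)$ is skew-symmetric and commutes with $D$; hence $e^{L(w)}$ is orthogonal with $(e^{L(w)})^{\top}=e^{-L(w)}$ and commutes with $D$. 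Therefore
\begin{equation*}
  v^{\top} D\,\gh_m(w,v)=v^{\top} e^{-L(w)}D\,g_m(e^{L(w)}v)=\bigl(e^{L(w)}v\bigr)^{\top} D\,g_m\bigl(e^{L(w)}v\bigr),
\end{equation*}
which vanishes by \cref{ass:SkewSym} applied to $X=e^{L(w)}v$. This is the instantaneous, discretization-ready counterpart of \cref{lem:InvTransSDE}.

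With this identity available, the hypothesis on the underlying scheme applies directly: the one-step method, applied to \cref{equ:TransformedSDE} with the Wiener arguments frozen at the values dictated by the method (the common midpoint value for the midpoint rule, the two endpoint values for the trapezoidal rule), acts on vector fields $v\mapsto\gh_m(w,v)$ for which $v^{\top} D\gh_m(w,v)=0$. Since the method preserves quadratic invariants, it follows that $\I(V^n_{n+1})=\I(V^n_n)=\I(Y_n)$. The back-transformation is handled by \cref{lem:TransfPresInv}: with $t=t_{n+1}$ it gives $\I(Y_{n+1})=\I(e^{\Delta L^n}V^n_{n+1})=\I(V^n_{n+1})$. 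Combining the two yields $\I(Y_{n+1})=\I(Y_n)$, and the induction closes the argument.

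The main obstacle is the middle step, namely phrasing \enquote{preserves quadratic invariants} in a form that truly transfers to the transformed problem. The continuous-time conservation guaranteed by \cref{lem:InvTransSDE} is not by itself sufficient, because a one-step method reproduces an invariant only when the relevant algebraic orthogonality condition holds for the (frozen) vector fields it evaluates; the point is therefore to confirm that $v^{\top} D\gh_m(w,v)=0$ holds identically in $w$, so that freezing $w$ at any stage value preserves the structure. A secondary subtlety worth flagging is the non-autonomous character of \cref{equ:TransformedSDE}: for the midpoint rule a single frozen argument suffices and the standard quadratic-invariant preservation of the midpoint rule applies verbatim, whereas for a method that evaluates $\gh_m$ at different Wiener arguments in different stages one must check that the invariant-preservation property is robust to this stage-wise freezing before invoking it.
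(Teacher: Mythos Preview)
Your proposal is correct and follows essentially the same route as the paper: decompose one Lawson step into the underlying method on the transformed system followed by the back-transformation, invoke \cref{lem:TransfPresInv} for the latter, and close by induction. The paper's proof is terser and simply asserts that the underlying method preserves $\I$ on \cref{equ:TransformedSDE}, deferring the nonautonomous subtlety to a remark; you make this explicit by verifying the pointwise identity $v^{\top}D\,\gh_m(w,v)=0$ for all frozen $w$, which is exactly the ingredient needed and is a welcome clarification rather than a different argument.
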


Requirements for stochastic Runge--Kutta methods to preserve quadratic invariants are given in
\citet{hong15poq}. Strictly speaking, those results only apply to autonomous systems, it is however
straightforward to extend them to the nonautonomous transformed
SDE \eqref{equ:TransformedSDE}. We can then conclude with the following \lcnamecref{cor:QInvMidpoint}:

\begin{corollary}\label{cor:QInvMidpoint}
  Under the assumptions of \cref{thm:NumQuadInv} the midpoint stochastic Lawson scheme
  preserves the quadratic invariants \labelcref{eq:quadraticinvariant}.
\end{corollary}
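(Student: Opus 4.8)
The plan is to apply \cref{thm:NumQuadInv}: with all standing assumptions in force, it suffices to verify that the \emph{underlying} scheme---the implicit midpoint rule \cref{eq:Midpointrule} applied to the transformed system \cref{equ:TransformedSDE}---preserves the quadratic invariant $\I$. The only genuine subtlety is that \cref{equ:TransformedSDE} is nonautonomous, so one cannot quote the autonomous quadratic-invariant theory of \cite{hong15poq} verbatim; instead I would supply the ``straightforward extension'' directly via the classical midpoint telescoping argument, taking care that the relevant $D$-orthogonality survives freezing the Wiener argument at its midpoint value.

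First I would record the key algebraic fact that for every $t$ and every vector $v$,
\[
  v^{\top} D\, \gh_m(W^n(t),v) = 0, \qquad m = 0,\dots,M.
\]
Writing $\gh_m(W^n(t),v) = e^{-L^n(t)} g_m(e^{L^n(t)}v)$ and using that under \cref{ass:ABskew} the $A_m$ commute with $D$ (so $D$ commutes with $e^{-L^n(t)}$) and are skew-symmetric (so $e^{L^n(t)}$ is orthogonal, i.e.\ $e^{-L^n(t)} = (e^{L^n(t)})^{\top}$), I obtain
\[
  v^{\top} D\, e^{-L^n(t)} g_m(e^{L^n(t)}v) = v^{\top} e^{-L^n(t)} D\, g_m(e^{L^n(t)}v) = (e^{L^n(t)}v)^{\top} D\, g_m(e^{L^n(t)}v),
\]
which vanishes by \cref{ass:SkewSym} applied at $X = e^{L^n(t)}v$. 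I would stress that this identity holds for \emph{all} values of the Wiener argument, which is precisely what the nonautonomous step below requires.

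Next I would carry out the midpoint computation on \cref{equ:TransformedSDE}. Abbreviating the midpoint quantities $\bar V = \tfrac12(V^n_n + V^n_{n+1})$ and $\bar W = \tfrac12\bigl(W^n(t_n)+W^n(t_{n+1})\bigr)$, the scheme \cref{eq:Midpointrule} reads $V^n_{n+1}-V^n_n = \sum_{m=0}^M \gh_m(\bar W,\bar V)\,\Delta W^n_m$. Using $D = D^{\top}$ together with $V^n_{n+1} = \bar V + \tfrac12(V^n_{n+1}-V^n_n)$ and $V^n_n = \bar V - \tfrac12(V^n_{n+1}-V^n_n)$, the quadratic form telescopes to
\[
  \I(V^n_{n+1}) - \I(V^n_n) = 2\,\bar V^{\top} D\,(V^n_{n+1}-V^n_n) = 2\sum_{m=0}^M \bar V^{\top} D\, \gh_m(\bar W,\bar V)\,\Delta W^n_m.
\]
Each summand vanishes by the orthogonality identity above, evaluated at the frozen argument $(\bar W,\bar V)$; hence $\I(V^n_{n+1}) = \I(V^n_n)$, so the midpoint step preserves $\I$ in the transformed variable.

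Finally I would transport this back to the original variable. Since $Y_{n+1} = e^{\Delta L^n}V^n_{n+1}$ with $\Delta L^n = L^n(t_{n+1})$, \cref{lem:TransfPresInv} gives $\I(Y_{n+1}) = \I(V^n_{n+1})$, while $\I(Y_n) = \I(V^n_n)$ since $Y_n = V^n_n$; combining yields $\I(Y_{n+1}) = \I(Y_n)$. Induction on $n$, together with $\I(Y_0) = \I(X_0)$ and \cref{lem:InvSDE}, then gives $\I(Y_n) = \I(X(t_0))$ for all $n$. The main obstacle is entirely the nonautonomous bookkeeping---making explicit that $v^{\top}D\,\gh_m(\cdot,v)=0$ holds uniformly in the frozen Wiener argument, which legitimizes evaluating at $(\bar W,\bar V)$---after which the calculation is the standard midpoint invariant-preservation argument.
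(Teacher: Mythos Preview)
Your proof is correct and follows the paper's approach: invoke \cref{thm:NumQuadInv} and verify that the implicit midpoint rule preserves $\I$ on the transformed system, where you carry out explicitly the ``straightforward extension'' of \cite{hong15poq} to the nonautonomous setting that the paper only alludes to. The paper additionally sketches an alternative direct proof (in the remark following the proof of \cref{cor:TrapzruleQinvariant}): view one midpoint Lawson step as a forward exponential Euler half-step followed by a backward exponential Euler half-step with the \emph{same} increments $\Delta W^n_m/2$, so that the defect term in \eqref{equ:quadinv_fe_be} vanishes identically.
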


\begin{proof}[of \cref{thm:NumQuadInv}]
This follows directly by applying a numerical one-step method that preserves quadratic invariants to the transformed system:
Let $V^{n}_{n+1}$ be the numerical approximation at time $t_{n+1}$ of \cref{equ:TransformedSDE}. By \cref{lem:InvTransSDE} and the scheme preserving quadratic invariants, it holds that
\[
\I(V^{n}_{n+1})=\I(V^n(t_n))=\I(X(t))=\I(X(t_0)).
\]
By \cref{lem:TransfPresInv} and the definition of $Y_{n+1} = e^{\Delta L^n}V^n_{n+1}$ it follows that
\[
\I(Y_{n+1})=\I(V^{n}_{n+1})=\I(X(t))=\I(X(t_0)),
\]
which finishes the proof.
\end{proof}

It is well known that the standard trapezoidal rule almost preserves quadratic invariants for
deterministic differential equations, see e.g.\  \citet[Example V.4.2]{hairer06gni}. This property
does unfortunately not extend to the stochastic trapezoidal rule in general. But if all the drift and diffusion
terms are linear and included in the $L$ operator, a similar result can be obtained, which is also
a good argument for using full stochastic Lawson schemes instead of the drift ones.

\begin{theorem}\label{cor:TrapzruleQinvariant}
Let \cref{ass:commute,ass:SkewSym,ass:ABskew} hold, let $g_m\equiv0$ for $m=1,\dots,M$ in \cref{equ:sde} (all diffusion terms are linear and commute) and
let $Y_n$ denote its discrete time approximation using the trapezoidal FSL scheme {with equidistant step sizes $h$}. Then
\begin{equation}
  \I(Y_{n}) - \I(Y_0) = -\frac14 \left( g_0(Y_n)^{\top}Dg_0(Y_n)-  g_0(Y_0)^{\top}Dg_0(Y_0)
  \right)h^2.
\end{equation}
\end{theorem}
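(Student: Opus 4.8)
The plan is to reduce everything to a pathwise algebraic identity for a single step and then telescope, exactly mirroring the classical argument for the deterministic trapezoidal rule (\cite[Example V.4.2]{hairer06gni}) but carrying the stochastic exponential factor along. First I would specialize the trapezoidal FSL scheme \eqref{equ:LawsonTrap} to the present situation. Since $g_m\equiv 0$ for $m=1,\dots,M$ and the deterministic increment $\Delta W_0^n$ equals the (uniform) step size $h$, the only surviving nonlinear contribution is the drift term $g_0$, and the step reads
\[
Y_{n+1}=P_nY_n+\frac h2\bigl(P_ng_0(Y_n)+g_0(Y_{n+1})\bigr),\qquad P_n:=e^{\Delta L^n}.
\]
Because each $A_m$ is skew-symmetric and commutes with $D$ (\cref{ass:ABskew}), the matrix $\Delta L^n=\sum_m A_m\Delta W_m^n$ is skew-symmetric and commutes with $D$, so $P_n$ is orthogonal, commutes with $D$, and satisfies $P_n^{\top}DP_n=D$. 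In particular $\I(P_nv)=\I(v)$ for all $v$ (this is \cref{lem:TransfPresInv}) and $(P_nw)^{\top}D(P_nz)=w^{\top}Dz$ for all $w,z$. These identities hold for every realization of the Wiener increments, so the entire computation below is pathwise.

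The core is the one-step evaluation of $\I(Y_{n+1})-\I(Y_n)$. Writing $Z=Y_{n+1}$, $Y=Y_n$ and using $\I(P_nY)=\I(Y)$ together with the symmetry of $D$,
\[
\I(Z)-\I(Y)=(Z-P_nY)^{\top}D(Z+P_nY)=\frac h2\bigl(P_ng_0(Y)+g_0(Z)\bigr)^{\top}D\bigl(Z+P_nY\bigr).
\]
Expanding the right-hand side, the two terms $\bigl(P_ng_0(Y)\bigr)^{\top}D(P_nY)=g_0(Y)^{\top}DY$ and $g_0(Z)^{\top}DZ$ vanish by \cref{ass:SkewSym}. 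In each of the two remaining cross terms I would substitute $Z-P_nY=\tfrac h2(P_ng_0(Y)+g_0(Z))$ (once for $Z$ and once, with opposite sign, for $P_nY$), again discarding the pieces killed by \cref{ass:SkewSym}; using the symmetry of $D$ the genuinely mixed contributions cancel and one is left with
\[
\I(Y_{n+1})-\I(Y_n)=\frac{h^2}{4}\bigl(\I(P_ng_0(Y_n))-\I(g_0(Y_{n+1}))\bigr)=\frac{h^2}{4}\bigl(\I(g_0(Y_n))-\I(g_0(Y_{n+1}))\bigr),
\]
where in the last step I used $\I(P_ng_0(Y_n))=\I(g_0(Y_n))$. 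Equivalently, the modified quantity $\I(Y_n)+\tfrac{h^2}{4}\I(g_0(Y_n))$ is exactly conserved along the scheme, pathwise.

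Summing this telescoping identity from $0$ to $n$ and using $Y_0=X(t_0)$ gives, almost surely,
\[
\I(Y_n)-\I(X(t_0))=\frac{h^2}{4}\bigl(\I(g_0(X(t_0)))-\I(g_0(Y_n))\bigr),
\]
so that $E|\I(Y_n)-\I(X(t_0))|\le\frac{h^2}{4}\bigl(|\I(g_0(X(t_0)))|+E|\I(g_0(Y_n))|\bigr)$. The bound $\le ch^2$ then follows as soon as $\sup_{t_n\le T}E|\I(g_0(Y_n))|$ is finite uniformly in $h$ and $n$. Establishing this uniform moment bound is the only nonalgebraic point and the main obstacle: I expect to obtain it from standard moment estimates for the scheme under the paper's growth hypotheses on $g_0$. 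I also note that when $D$ is positive semidefinite the bound is essentially free, since then $\I(g_0(Y_n))\ge 0$ and the exactly conserved modified quantity forces $\I(Y_n)\le\I(X(t_0))+\tfrac{h^2}{4}\I(g_0(X(t_0)))$; if moreover $D$ is positive definite this bounds $|Y_n|$, and hence $\I(g_0(Y_n))$, pathwise and uniformly in $n$ and $h$, which already covers the norm-preserving situation of the motivating example.
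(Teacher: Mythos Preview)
Your proposal is correct and reaches exactly the same pathwise identity
\[
\I(Y_n)-\I(Y_0)=\frac{h^2}{4}\bigl(\I(g_0(Y_0))-\I(g_0(Y_n))\bigr)
\]
that the paper obtains, but by a genuinely different route. The paper does not work with the one-step update directly; instead it decomposes the trapezoidal Lawson step \eqref{equ:LawsonTrap} into a half-step of exponential forward Euler followed by a half-step of exponential backward Euler, introduces the intermediate points $\hat{Y}_n$, and shows that the quantity $\I(\hat{Y}_n)$ is exactly conserved (because the backward-Euler half-step followed by the next forward-Euler half-step acts like a modified midpoint rule). The defect $\I(Y_n)-\I(Y_0)$ then comes only from the boundary half-steps at $n=0$ and $n$. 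Your argument bypasses this compositional structure entirely and simply expands $\I(Y_{n+1})-\I(Y_n)=(Z-P_nY)^\top D(Z+P_nY)$, which is shorter and more elementary. What the paper's decomposition buys is an explanation of \emph{why} the telescoping occurs (the hidden midpoint structure) and, as the paper remarks, an alternative direct proof of \cref{cor:QInvMidpoint}; what your approach buys is brevity and a transparent modified invariant $\I(Y_n)+\tfrac{h^2}{4}\I(g_0(Y_n))$.

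On the remaining moment bound $\sup_{t_n\le T}E|\I(g_0(Y_n))|<\infty$: you are right to flag it, but note that the paper's own proof stops at the algebraic identity and does not address this point either, so you are not missing anything relative to the paper. Your observation that for positive (semi)definite $D$ the modified invariant yields a pathwise a~priori bound is a nice addition not present in the paper.
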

\begin{proof}
The trapezoidal Lawson rule \eqref{equ:LawsonTrap} can be considered as composed by two half steps,
one with an exponential forward Euler step, and one with an exponential backward Euler step, as
follows:
\begin{align}
  \hat{Y}_n &= e^{\frac12 \Delta L^n}Y_n + e^{\frac12 \Delta L^n} \label{equ:fe}
  \sum_{m=0}^{M}g_m(Y_n)\frac{\Delta W^n_m}{2}, \\
  Y_{n+1} &= e^{\frac12 \Delta L^n}\hat{Y}_n + \sum_{m=0}^{M}g_m(Y_{n+1})\frac{\Delta W^n_m}{2}.
  \label{equ:be}
\end{align}
Doing one more half step with the forward Euler method gives
\begin{align*}
  \hat{Y}_{n+1} &=  e^{\frac12 \Delta L^{n+1}}Y_{n+1} + e^{\frac12 \Delta L^{n+1}}
  \sum_{m=0}^{M}g_m(Y_{n+1})\frac{\Delta W^{n+1}_m}{2} \\
  &= e^{\frac12 (\Delta L^n+\Delta L^{n+1})} \hat{Y}_n + e^{\frac12 \Delta
  L^{n+1}}\sum_{m=0}^{M}g_m(Y_{n+1}) \frac{\Delta W_m^n + \Delta W_m^{n+1}}{2},
\end{align*}
which together with \eqref{equ:be} could be considered as one step of a modified version of the
midpoint rule starting from $\hat{Y}_n$, and we would like to see if it conserves quadratic
invariants: By \cref{ass:ABskew}, $e^{\Delta L^n}$ is orthogonal and commutes with $D$, so
\begin{multline}\label{eq:int}
  \hat{Y}_{n+1}^{\top}D\hat{Y}_{n+1} = \hat{Y}_n^{\top}D\hat{Y}_n
  + \sum_{m=0}^MY_{n+1}^{\top} D g_m(Y_{n+1}) (\Delta W_m^n+\Delta W_m^{n+1}) \\
  + \sum_{m,r=0}^M g_r(Y_{n+1})^{\top} D g_m(Y_{n+1}) \frac{(\Delta W_r^{n+1}-\Delta
  W_r^{n})({\Delta} W_m^{n+1}+{\Delta} W_m^{n})}{4},
\end{multline}
where we also used \cref{ass:commute} and that by \eqref{equ:be} it holds that \ifnotarxiv$\else\[\fi e^{\frac12 \Delta L^n}\hat{Y}_n = Y_{n+1} -
\sum_{r=0}^{M}g_r(Y_{n+1})\frac{\Delta W^n_r}{2}\ifnotarxiv$.\else.\]\fi
The second term of \eqref{eq:int} vanishes since $Y^{\top}Dg(Y)=0$ for all $Y\in\mathbb{R}^d$ by
\cref{ass:SkewSym}. Due to the last term,  the quadratic invariant is not preserved in general.
Since  $\Delta W_0=h$, if $g_m=0$ for $m=1,\dotsc M$ the last
sum is 0, and $\hat{Y}_{n+1}^{\top}D\hat{Y}_{n+1} = \hat{Y}_n^{\top}D\hat{Y}_n$, which by \eqref{eq:quadraticinvariant} can be written as
$\I(\hat{Y}_{n+1})=\I(\hat{Y}_{n})$. It follows that
\begin{align*}
  \I(Y_{n+1}) - \I(Y_0) & = \I(Y_{n+1}) - \I(\hat{Y}_{n})+\I(\hat{Y}_{n})-\I(\hat{Y}_0)
  + \I(\hat{Y}_0) - \I({Y}_0) \\
  & = \I(Y_{n+1}) - \I(\hat{Y}_{n}) +  \I(\hat{Y}_0) - \I({Y}_0).
\end{align*}
Direct computations show that
\begin{align*}
  \I(\hat{Y}_0)-\I(Y_0) &= \frac14 g_0(Y_0)^{\top}Dg_0(Y_0)h^2, \ifnotarxiv\qquad\else\ \fi \\
  \I(Y_{n+1}) - \I(\hat{Y}_{n}) &= -\frac14 g_0(Y_{n+1})^{\top}Dg_0(Y_{n+1})h^2,
\end{align*}
and in conclusion
\[
  \I(Y_{n}) - \I(Y_0) = -\frac14 \left( g_0(Y_n)^{\top}Dg_0(Y_n)-  g_0(Y_0)^{\top}Dg_0(Y_0)
  \right)h^2.
\]
\end{proof}

Notice that a similar argument can also be used as a direct proof of \cref{cor:QInvMidpoint}: One step of
the stochastic midpoint Lawson scheme is composed by one half step of the exponential backward Euler scheme
followed by one half step of the exponential forward Euler scheme, both using the same stochastic increments
$\Delta W^n_m/2$, so the last sum of \eqref{eq:int} is 0.

Notice also that if the quadratic invariant is associated to a bounded manifold, i.\,e., the eigenvalues of $D$ are all positive, then the proof of \cref{cor:TrapzruleQinvariant} implies that under the conditions given there, there exists a constant $c$ such that
\[
|\I(Y_{n}) - \I(Y_0)|\leq c h^2
\]
for all $n=0,1,\dots,N$ and $h$ sufficiently small.

\subsection{Preservation of linear invariants}
For completeness we also study when \cref{equ:sde} preserves linear invariants
\begin{equation}
	\tilde{\I}(X(t)) = r^{\top}X(t) = \tilde{\I}(X(t_0))
\end{equation}
for an $r\in \mathbb{R}^{d}$ and the ability of the numerical schemes to preserve it as well.

As we did for quadratic invariants, we calculate $\dmath \tilde{\I}(X(t))$,
\begin{equation*}
  \dmath \tilde{\I}(X(t)) = \sum_{m=0}^{M} \left(r^{\top} A_m X(t) + r^{\top}g_m(X(t))\right)\circ\dW_m(t).
\end{equation*}
Thus for \cref{equ:sde} to preserve linear invariants we need the following two assumptions to be fulfilled:
\begin{assumption}\label{ass:Nullspace}
  The matrices $A_m$, $m=0,\dots,M$, satisfy $r\in \text{\it Null}(A_m^{\top})$.
\end{assumption}
\begin{assumption}
    For all $X\in\mathbb{R}^d$ and $m=0,\dots,M$ we have $r^{\top} g_m(X)=0$.
\end{assumption}
We now note that by \cref{ass:Nullspace} we have that
\begin{equation*}
	r^{\top} e^{L^n(t)} = r^{\top}.
\end{equation*}
Using this property, \cref{lem:TransfPresInv,lem:InvTransSDE,thm:NumQuadInv} extend directly to linear invariants.

\section{Numerical examples}\label{sec:numEx}
In this section four different stochastic Lawson schemes are tested numerically and compared with
the standard midpoint rule: The trapezoidal DSL (TDSL), the trapezoidal FSL (TFSL),
the midpoint DSL (MDSL) and the midpoint FSL (MFSL).
All schemes are of {weak order 1, and of }{strong} order 1 for SDEs with {commutative} noise, otherwise of {strong} order 0.5.
The methods are tested on two oscillatory problems preserving
quadratic invariants. The error vs.\ step size for different degrees of
oscillatory behaviour is measured, as well as the ability of the schemes to preserve the invariants.
Finally, the methods are tested on a stochastic  Fermi-Pasta-Ulam-Tsingou (FPUT) problem. This is highly
oscillatory, but does not preserve quadratic invariants exactly. The main findings of the experiments
are:
\begin{itemize}
  \item The SL schemes resolve the oscillatory behaviour better than the midpoint rule. For small
    linear diffusion terms, there are no significant differences between the DSL and the FSL schemes.
    For high noise problems, the FSL schemes are superior.
    \item The midpoint SL schemes preserve quadratic invariants.
    \item The trapezoidal FSL scheme almost preserves quadratic invariants for linear diffusions, but not for non-linear diffusions.
    \item {The overhead caused by calculating the exponentials is insignificant, even for the
      FSL-schemes.}
\end{itemize}
In all cases, the underlying non-linear algebraic equations of the implicit methods are solved by Newton's method with tolerance $10^{-12}$.
\subsection{Stochastic rigid body problem}\label{sec:StochasticRigidBody}
The first example is the classical rigid body problem perturbed with linear
skew-symmetric drift and diffusion terms,  see e.g. \citet{abdulle12hwo,anmarkrud2017ocf,cohen12otn}. The
SDE is given by

\begin{equation}\label{equ:rigidbody}
\begin{multlined}
    \dX = \omega \begin{pmatrix} 0 & 1 & 0\\ -1 & 0 &0 \\ 0&0&0 \end{pmatrix}X \dt + \begin{pmatrix}
        0 & X_3 / I_3 & -X_2 / I_2 \\
        -X_3 / I_3 & 0 & X_1 / I_1 \\
        X_2 / I_2 & -X_1 / I_1 & 0
    \end{pmatrix}
    X \dt\ifnotarxiv\else\\\fi + \sigma \begin{pmatrix} 0 & 1 & 0\\ -1 & 0 &0 \\ 0&0&0
    \end{pmatrix}X \circ \dW,
    \end{multlined}
\end{equation}
with, following the above references,
\begin{equation}
	I_1 = 2, \quad I_2=1, \quad I_3=2/3, \quad t_0=0 \quad \mbox{and} \quad X(0) = (\cos(1.1), 0, \sin(1.1))^{\top}.
\end{equation}
Different values of the  parameters $\omega$ and $\sigma$ are chosen to investigate the schemes'
response on problems with oscillatory drift and/or diffusion terms.

First, we want to see how well the methods respond to different degrees of oscillatory behaviour.
The following two experiments are carried out:
\begin{enumerate}[(a)]
  \item \label{itema} The oscillatory drift and diffusion terms contribute equally much to
    the solution, thus there can be a significant amount of noise {in} the system. The values used in the experiments
    are $\omega=\sigma\in\{1,5,10\}$.
  \item \label{itemb} The noise term is small {($\sigma=0.3$)}, but the drift term causes no {oscillations} {($\omega=0$)}, average {($\omega=10$)} {or} very high
    oscillations {($\omega=100$)}. When $\omega=0$, the DSL schemes coincide
    with their underlying schemes. The choice of $\sigma$ corresponds to the one used by \citet{cohen12otn}.
\end{enumerate}
The integration interval is $[0,1]$ and the problem is solved with step sizes
$\log_2(h)\in\{-11,\ldots,-3\}$, and the {strong} error is calculated at the end of the interval.
The reference solution is computed by the MFSL scheme, using
$h_{\text{ref}}=2^{-17}$. In each case, the results are based on 1000 independent simulations of the
underlying Wiener process. The results are given in \cref{fig:RigidBodyGlobalError}.
The $95\%$-confidence intervals are in all cases found to be less than $12\%$
of the corresponding error value.

\pgfplotstableread[col
sep=comma]{Data/RigidBody/StochasticRigidBody_OuterIter=1_Miter=40_Mbatch=25_Nh=17_TDSL_TFSL_M_MDSL_MFSL_tend=1_Strong_Param=0_0.3Endpoint.csv}\ConvRigidBodyF
\pgfplotstableread[col sep=comma]{Data/RigidBody/StochasticRigidBody_OuterIter=1_Miter=40_Mbatch=25_Nh=17_TDSL_TFSL_M_MDSL_MFSL_tend=1_Strong_Param=10_0.3Endpoint.csv}\ConvRigidBodyS
\pgfplotstableread[col sep=comma]{Data/RigidBody/StochasticRigidBody_OuterIter=1_Miter=40_Mbatch=25_Nh=17_TDSL_TFSL_M_MDSL_MFSL_tend=1_Strong_Param=100_0.3Endpoint.csv}\ConvRigidBodyT

\pgfplotstableread[col sep=comma]{Data/RigidBody/StochasticRigidBody_OuterIter=1_Miter=40_Mbatch=25_Nh=17_TDSL_TFSL_M_MDSL_MFSL_tend=1_Strong_Param=1_1Endpoint.csv}\ConvRigidBodyFF
\pgfplotstableread[col sep=comma]{Data/RigidBody/StochasticRigidBody_OuterIter=1_Miter=40_Mbatch=25_Nh=17_TDSL_TFSL_M_MDSL_MFSL_tend=1_Strong_Param=5_5Endpoint.csv}\ConvRigidBodySS
\pgfplotstableread[col sep=comma]{Data/RigidBody/StochasticRigidBody_OuterIter=1_Miter=40_Mbatch=25_Nh=17_TDSL_TFSL_M_MDSL_MFSL_tend=1_Strong_Param=10_10Endpoint.csv}\ConvRigidBodyTT

\begin{figure}[ht!]
\centering
\begin{tikzpicture}
\begin{customlegend}[legend columns=5,legend style={align=left,draw=none,column sep=2ex},legend entries={TDSL, TFSL, MDSL, MFSL, Midpoint}]
        \addlegendimage{TDSLstyle}
        \addlegendimage{TFSLstyle}
        \addlegendimage{MDSLstyle}
        \addlegendimage{MFSLstyle}
        \addlegendimage{Mstyle}
\end{customlegend}
\end{tikzpicture}
\centering
\begin{tikzpicture}
\begin{customlegend}[legend columns=1,legend style={align=left,draw=none,column sep=2ex},legend entries={{reference line with slope 1}}]
        \addlegendimage{dotted}
\end{customlegend}
\end{tikzpicture}

\begin{subfigure}[b]{0.5\linewidth}
\centering\captionsetup{width=.8\linewidth}%
\begin{tikzpicture}
\begin{axis}[%
    name=plot1,
    width=0.85\textwidth,
    height=0.15\textheight,
    axis x line=bottom,
    ymode = log,
    xmode = log,
    log basis y={2},
    log basis x={2},
    ytick ={0.5,0.0625,0.0078125,0.0009765625,0.0001220703125,0.00001525878906},
    axis y line=left,
    title={$\omega=\sigma=1$},
    xlabel={$h$},
    ylabel={$E\|X(t_N)-Y_{N}\|$}]

    \addplot[TDSLstyle] table[%
    x={h},
    y={e1_TDSL},
    ]{\ConvRigidBodyFF};

    \addplot[TFSLstyle] table[%
    x={h},
    y={e1_TFSL},
    ]{\ConvRigidBodyFF};

    \addplot[MDSLstyle] table[%
    x={h},
    y={e1_MDSL},
    ]{\ConvRigidBodyFF};

    \addplot[MFSLstyle] table[%
    x={h},
    y={e1_MFSL},
    ]{\ConvRigidBodyFF};

    \addplot[Mstyle] table[%
    x={h},
    y={e1_M},
    ]{\ConvRigidBodyFF};

    \addplot [domain=0.00048828:0.1250,dotted] {2*x};

\end{axis}

\begin{axis}[%
    name=plot2,
    at=(plot1.below south west), anchor=above north west,
    width=0.85\textwidth,
    height=0.15\textheight,
    axis x line=bottom,
    ymode = log,
    xmode = log,
    log basis y={2},
    log basis x={2},
    ytick ={0.5,0.0625,0.0078125,0.0009765625,0.0001220703125,0.00001525878906},
    axis y line=left,
    title={$\omega=\sigma=5$},
    xlabel={$h$},
    ylabel={$E\|X(t_N)-Y_{N}\|$}]

    \addplot[TDSLstyle] table[%
    x={h},
    y={e1_TDSL},
    ]{\ConvRigidBodySS};

    \addplot[TFSLstyle] table[%
    x={h},
    y={e1_TFSL},
    ]{\ConvRigidBodySS};

    \addplot[MDSLstyle] table[%
    x={h},
    y={e1_MDSL},
    ]{\ConvRigidBodySS};

    \addplot[MFSLstyle] table[%
    x={h},
    y={e1_MFSL},
    ]{\ConvRigidBodySS};

    \addplot[Mstyle] table[%
    x={h},
    y={e1_M},
    ]{\ConvRigidBodySS};

    \addplot [domain=0.00048828:0.1250,dotted] {2*x};

\end{axis}

\begin{axis}[%
    name=plot3,
    at=(plot2.below south west), anchor=above north west,
    width=0.85\textwidth,
    height=0.15\textheight,
    axis x line=bottom,
    ymode = log,
    xmode = log,
    log basis y={2},
    log basis x={2},
    ytick ={0.5,0.0625,0.0078125,0.0009765625,0.0001220703125,0.00001525878906},
    axis y line=left,
    title={$\omega=\sigma=10$},
    xlabel={$h$},
    ylabel={$E\|X(t_N)-Y_{N}\|$}]

    \addplot[TDSLstyle] table[%
    x={h},
    y={e1_TDSL},
    ]{\ConvRigidBodyTT};

    \addplot[TFSLstyle] table[%
    x={h},
    y={e1_TFSL},
    ]{\ConvRigidBodyTT};

    \addplot[MDSLstyle] table[%
    x={h},
    y={e1_MDSL},
    ]{\ConvRigidBodyTT};

    \addplot[MFSLstyle] table[%
    x={h},
    y={e1_MFSL},
    ]{\ConvRigidBodyTT};

    \addplot[Mstyle] table[%
    x={h},
    y={e1_M},
    ]{\ConvRigidBodyTT};

    \addplot [domain=0.00048828:0.1250,dotted] {2*x};

\end{axis}
\end{tikzpicture}
\caption{\mbox{}}\label{fig:RigidBodyHighHigh}
\end{subfigure}%
\hspace*{\fill}
\begin{subfigure}[b]{0.5\linewidth}
\centering\captionsetup{width=.8\linewidth}%
\begin{tikzpicture}
\begin{axis}[%
    name=plot1,
    width=0.85\textwidth,
    height=0.15\textheight,
    axis x line=bottom,
    ymode = log,
    xmode = log,
    log basis y={2},
    log basis x={2},
    ytick ={0.5,0.0625,0.0078125,0.0009765625,0.0001220703125,0.00001525878906},
    axis y line=left,
    title={$\omega=0$, $\sigma=0.3$},
    xlabel={$h$},
    ylabel={$E\|X(t_N)-Y_{N}\|$}]

    \addplot[TFSLstyle] table[%
    x={h},
    y={e1_TFSL},
    ]{\ConvRigidBodyF};

    \addplot[MFSLstyle] table[%
    x={h},
    y={e1_MFSL},
    ]{\ConvRigidBodyF};

    \addplot[Mstyle] table[%
    x={h},
    y={e1_M},
    ]{\ConvRigidBodyF};

    \addplot [domain=0.00048828:0.1250,dotted] {x/8};

\end{axis}
\begin{axis}[%
    name=plot2,
    at=(plot1.below south west), anchor=above north west,
    width=0.85\textwidth,
    height=0.15\textheight,
    axis x line=bottom,
    ymode = log,
    xmode = log,
    log basis y={2},
    log basis x={2},
    ytick ={0.5,0.0625,0.0078125,0.0009765625,0.0001220703125,0.00001525878906},
    axis y line=left,
    title={$\omega=10$, $\sigma=0.3$},
    xlabel={$h$},
    ylabel={$E\|X(t_N)-Y_{N}\|$}]

    \addplot[TDSLstyle] table[%
    x={h},
    y={e1_TDSL},
    ]{\ConvRigidBodyS};

    \addplot[TFSLstyle] table[%
    x={h},
    y={e1_TFSL},
    ]{\ConvRigidBodyS};

    \addplot[MDSLstyle] table[%
    x={h},
    y={e1_MDSL},
    ]{\ConvRigidBodyS};

    \addplot[MFSLstyle] table[%
    x={h},
    y={e1_MFSL},
    ]{\ConvRigidBodyS};

    \addplot[Mstyle] table[%
    x={h},
    y={e1_M},
    ]{\ConvRigidBodyS};

    \addplot [domain=0.00048828:0.1250,dotted] {x/16};

\end{axis}
\begin{axis}[%
    name=plot3,
    at=(plot2.below south west), anchor=above north west,
    width=0.85\textwidth,
    height=0.15\textheight,
    axis x line=bottom,
    ymode = log,
    xmode = log,
    log basis y={2},
    log basis x={2},
    ytick ={0.5,0.0625,0.0078125,0.0009765625,0.0001220703125,0.00001525878906},
    axis y line=left,
    title={$\omega=100$, $\sigma=0.3$},
    xlabel={$h$},
    ylabel={$E\|X(t_N)-Y_{N}\|$}]

    \addplot[TDSLstyle] table[%
    x={h},
    y={e1_TDSL},
    ]{\ConvRigidBodyT};

    \addplot[TFSLstyle] table[%
    x={h},
    y={e1_TFSL},
    ]{\ConvRigidBodyT};

    \addplot[MDSLstyle] table[%
    x={h},
    y={e1_MDSL},
    ]{\ConvRigidBodyT};

    \addplot[MFSLstyle] table[%
    x={h},
    y={e1_MFSL},
    ]{\ConvRigidBodyT};

    \addplot[Mstyle] table[%
    x={h},
    y={e1_M},
    ]{\ConvRigidBodyT};

    \addplot [domain=0.00048828:0.1250,dotted] {x/8};

\end{axis}

\end{tikzpicture}
\caption{\mbox{}}\label{fig:RigidBodyHighLow}
\end{subfigure}%
\caption{The stochastic rigid body problem: {Strong} error vs.\  {step size} for different values of
$\sigma$ and $\omega$.}\label{fig:RigidBodyGlobalError}
\end{figure}
The results of experiment \eqref{itema} are shown in \cref{fig:RigidBodyHighHigh}. In this experiment,
there is a significant contribution to the solution from the noise term. All schemes demonstrate an acceptable
performance when $\omega=\sigma=1$, the case with slow oscillations. For higher values of $\omega$
and $\sigma$ the FSL schemes are superior. {In the case of $\sigma=\omega=10$, only the
FSL-schemes are able to produce reliable solutions for $h>2^{-7}$. This behaviour}
is as expected, since the FSL schemes solve the oscillatory parts of the problem exactly.

The results of experiment \eqref{itemb} are given in \cref{fig:RigidBodyHighLow}. Here, the
oscillatory noise is quite small, and the oscillations are dominated by the contribution from the
linear drift in all cases but $\omega=0$. For $\omega=0$, we observe no significant differences
between the methods. The dominating part here is the nonlinear drift term,
which is handled similarly by the schemes. For higher values of $\omega$, all the SL schemes
yield similar results, and all are superior to the midpoint rule. In particular the SL schemes are able to resolve the
high oscillations for larger step sizes, up to $h=2^{-6}$ even when $\omega=100$. The midpoint
rule fails to give reliable results for step sizes above approximately $h=2^{-9}$ in this case.

All schemes exhibit, as expected, {strong} order 1 for sufficiently small {step sizes}.
\begin{figure}[ht!]
\centering
\begin{tikzpicture}
\begin{customlegend}[legend columns=5,legend style={align=left,draw=none,column sep=2ex},legend entries={TDSL, TFSL, MDSL, MFSL, Midpoint}]
        \addlegendimage{TDSLstyle}
        \addlegendimage{TFSLstyle}
        \addlegendimage{MDSLstyle}
        \addlegendimage{MFSLstyle}
        \addlegendimage{Mstyle}
\end{customlegend}
\end{tikzpicture}
\begin{subfigure}[b]{0.5\linewidth}
\centering\captionsetup{width=.8\linewidth}%
\begin{tikzpicture}
\begin{axis}[%
    name=plot1,
    width=0.85\textwidth,
    height=0.15\textheight,
    axis x line=bottom,
    ymode = log,
    xmode = log,
    log basis y={2},
    log basis x={2},
    xtick ={0.5,0.0625,0.0078125,0.0009765625,0.0001220703125,0.00001525878906},
    ytick = {0.25,0.03125,0.00390625},
    axis y line=left,
    title={$\omega=\sigma=1$},
    ylabel={Wall-clock time $[s]$},
    xlabel={$E\|X(t_N)-Y_{N}\|$}]

    \addplot[TDSLstyle] table[%
    y={tTDSL},
    x={e1_TDSL},
    ]{\ConvRigidBodyFF};

    \addplot[TFSLstyle] table[%
    y={tTFSL},
    x={e1_TFSL},
    ]{\ConvRigidBodyFF};

    \addplot[MDSLstyle] table[%
    y={tMDSL},
    x={e1_MDSL},
    ]{\ConvRigidBodyFF};

    \addplot[MFSLstyle] table[%
    y={tMFSL},
    x={e1_MFSL},
    ]{\ConvRigidBodyFF};

    \addplot[Mstyle] table[%
    y={tM},
    x={e1_M},
    ]{\ConvRigidBodyFF};

\end{axis}

\begin{axis}[%
    name=plot2,
    at=(plot1.below south west), anchor=above north west,
    width=0.85\textwidth,
    height=0.15\textheight,
    axis x line=bottom,
    ymode = log,
    xmode = log,
    log basis y={2},
    log basis x={2},
    xtick ={0.5,0.0625,0.0078125,0.0009765625,0.0001220703125,0.00001525878906},
    ytick = {0.25,0.03125,0.00390625},
    axis y line=left,
    title={$\omega=\sigma=5$},
    ylabel={Wall-clock time $[s]$},
    xlabel={$E\|X(t_N)-Y_{N}\|$}]

    \addplot[TDSLstyle] table[%
    y={tTDSL},
    x={e1_TDSL},
    ]{\ConvRigidBodySS};

    \addplot[TFSLstyle] table[%
    y={tTFSL},
    x={e1_TFSL},
    ]{\ConvRigidBodySS};

    \addplot[MDSLstyle] table[%
    y={tMDSL},
    x={e1_MDSL},
    ]{\ConvRigidBodySS};

    \addplot[MFSLstyle] table[%
    y={tMFSL},
    x={e1_MFSL},
    ]{\ConvRigidBodySS};

    \addplot[Mstyle] table[%
    y={tM},
    x={e1_M},
    ]{\ConvRigidBodySS};
\end{axis}

\begin{axis}[%
    name=plot3,
    at=(plot2.below south west), anchor=above north west,
    width=0.85\textwidth,
    height=0.15\textheight,
    axis x line=bottom,
    ymode = log,
    xmode = log,
    log basis y={2},
    log basis x={2},
    xtick ={0.5,0.0625,0.0078125,0.0009765625,0.0001220703125,0.00001525878906},
    ytick = {0.25,0.03125,0.00390625},
    axis y line=left,
    title={$\omega=\sigma=10$},
    ylabel={Wall-clock time $[s]$},
    xlabel={$E\|X(t_N)-Y_{N}\|$}]

    \addplot[TDSLstyle] table[%
    y={tTDSL},
    x={e1_TDSL},
    ]{\ConvRigidBodyTT};

    \addplot[TFSLstyle] table[%
    y={tTFSL},
    x={e1_TFSL},
    ]{\ConvRigidBodyTT};

    \addplot[MDSLstyle] table[%
    y={tMDSL},
    x={e1_MDSL},
    ]{\ConvRigidBodyTT};

    \addplot[MFSLstyle] table[%
    y={tMFSL},
    x={e1_MFSL},
    ]{\ConvRigidBodyTT};

    \addplot[Mstyle] table[%
    y={tM},
    x={e1_M},
    ]{\ConvRigidBodyTT};

\end{axis}
\end{tikzpicture}
\caption{\mbox{}}
\end{subfigure}%
\hspace*{\fill}
\begin{subfigure}[b]{0.5\linewidth}
\centering\captionsetup{width=.8\linewidth}%
\begin{tikzpicture}
\begin{axis}[%
    name=plot1,
    width=0.85\textwidth,
    height=0.15\textheight,
    axis x line=bottom,
    ymode = log,
    xmode = log,
    log basis y={2},
    log basis x={2},
    xtick ={0.5,0.0625,0.0078125,0.0009765625,0.0001220703125,0.00001525878906},
    ytick = {0.25,0.03125,0.00390625},
    axis y line=left,
    title={$\omega=0$, $\sigma=0.3$},
    ylabel={Wall-clock time $[s]$},
    xlabel={$E\|X(t_N)-Y_{N}\|$}]

    \addplot[TFSLstyle] table[%
    y={tTFSL},
    x={e1_TFSL},
    ]{\ConvRigidBodyF};

    \addplot[MFSLstyle] table[%
    y={tMFSL},
    x={e1_MFSL},
    ]{\ConvRigidBodyF};

    \addplot[Mstyle] table[%
    y={tM},
    x={e1_M},
    ]{\ConvRigidBodyF};

\end{axis}
\begin{axis}[%
    name=plot2,
    at=(plot1.below south west), anchor=above north west,
    width=0.85\textwidth,
    height=0.15\textheight,
    axis x line=bottom,
    ymode = log,
    xmode = log,
    log basis y={2},
    log basis x={2},
    xtick ={0.5,0.0625,0.0078125,0.0009765625,0.0001220703125,0.00001525878906},
    ytick = {0.25,0.03125,0.00390625},
    axis y line=left,
    title={$\omega=10$, $\sigma=0.3$},
    ylabel={Wall-clock time $[s]$},
    xlabel={$E\|X(t_N)-Y_{N}\|$}]

    \addplot[TDSLstyle] table[%
    y={tTDSL},
    x={e1_TDSL},
    ]{\ConvRigidBodyS};

    \addplot[TFSLstyle] table[%
    y={tTFSL},
    x={e1_TFSL},
    ]{\ConvRigidBodyS};

    \addplot[MDSLstyle] table[%
    y={tMDSL},
    x={e1_MDSL},
    ]{\ConvRigidBodyS};

    \addplot[MFSLstyle] table[%
    y={tMFSL},
    x={e1_MFSL},
    ]{\ConvRigidBodyS};

    \addplot[Mstyle] table[%
    y={tM},
    x={e1_M},
    ]{\ConvRigidBodyS};
\end{axis}
\begin{axis}[%
    name=plot3,
    at=(plot2.below south west), anchor=above north west,
    width=0.85\textwidth,
    height=0.15\textheight,
    axis x line=bottom,
    ymode = log,
    xmode = log,
    log basis y={2},
    log basis x={2},
    xtick ={0.5,0.0625,0.0078125,0.0009765625,0.0001220703125,0.00001525878906},
    ytick = {0.25,0.03125,0.00390625},
    axis y line=left,
    title={$\omega=100$, $\sigma=0.3$},
    ylabel={Wall-clock time $[s]$},
    xlabel={$E\|X(t_N)-Y_{N}\|$}]

    \addplot[TDSLstyle] table[%
    y={tTDSL},
    x={e1_TDSL},
    ]{\ConvRigidBodyT};

    \addplot[TFSLstyle] table[%
    y={tTFSL},
    x={e1_TFSL},
    ]{\ConvRigidBodyT};

    \addplot[MDSLstyle] table[%
    y={tMDSL},
    x={e1_MDSL},
    ]{\ConvRigidBodyT};

    \addplot[MFSLstyle] table[%
    y={tMFSL},
    x={e1_MFSL},
    ]{\ConvRigidBodyT};

    \addplot[Mstyle] table[%
    y={tM},
    x={e1_M},
    ]{\ConvRigidBodyT};

\end{axis}

\end{tikzpicture}
\caption{\mbox{}}
\end{subfigure}%
\caption{{The stochastic rigid body problem: Wall-clock time per batch of 25 paths vs.\ accuracy for different values of
$\sigma$ and $\omega$.}}\label{fig:EfficiencyRigidBody}
\end{figure}
{In \cref{fig:EfficiencyRigidBody} we depict the computational efforts, measured as wall-clock time
per batch of 25 paths vs.\ the strong error averaged over all batches.
Somewhat surprisingly, the FSL schemes are slightly cheaper in terms of computational work per step.
All the methods are implicit, thus the efforts of solving the nonlinear equations dominate. The
computational overhead required for calculating the exponentials for the FSL methods seems in this
example to be outweighted by the fact that there is no diffusion term to calculate in the right hand
side of the equation, as it is absorbed in the exponential. }

\pgfplotstableread[col
sep=comma]{Data/RigidBodyWeak/StochasticRigidBody_OuterIter=12_Miter=20_Mbatch=2500_TDSL_TFSL_M_MDSL_MFSL_tend=1_Weak_Param=0_0.3TrajectoryMax.csv}\ConvRigidBodyF
\pgfplotstableread[col sep=comma]{Data/RigidBodyWeak/StochasticRigidBody_OuterIter=12_Miter=20_Mbatch=2500_TDSL_TFSL_M_MDSL_MFSL_tend=1_Weak_Param=10_0.3TrajectoryMax.csv}\ConvRigidBodyS
\pgfplotstableread[col sep=comma]{Data/RigidBodyWeak/StochasticRigidBody_OuterIter=12_Miter=20_Mbatch=2500_TDSL_TFSL_M_MDSL_MFSL_tend=1_Weak_Param=100_0.3TrajectoryMax.csv}\ConvRigidBodyT

\pgfplotstableread[col sep=comma]{Data/RigidBodyWeak/StochasticRigidBody_OuterIter=12_Miter=20_Mbatch=2500_TDSL_TFSL_M_MDSL_MFSL_tend=1_Weak_Param=1_1TrajectoryMax.csv}\ConvRigidBodyFF
\pgfplotstableread[col sep=comma]{Data/RigidBodyWeak/StochasticRigidBody_OuterIter=12_Miter=20_Mbatch=2500_TDSL_TFSL_M_MDSL_MFSL_tend=1_Weak_Param=5_5TrajectoryMax.csv}\ConvRigidBodySS
\pgfplotstableread[col sep=comma]{Data/RigidBodyWeak/StochasticRigidBody_OuterIter=12_Miter=20_Mbatch=2500_TDSL_TFSL_M_MDSL_MFSL_tend=1_Weak_Param=10_10TrajectoryMax.csv}\ConvRigidBodyTT

\begin{figure}[ht!]
\centering
\begin{tikzpicture}
\begin{customlegend}[legend columns=5,legend style={align=left,draw=none,column sep=2ex},legend entries={TDSL, TFSL, MDSL, MFSL, Midpoint}]
        \addlegendimage{TDSLstyle}
        \addlegendimage{TFSLstyle}
\end{customlegend}
\end{tikzpicture}
\centering
\begin{tikzpicture}
\begin{customlegend}[legend columns=2,legend style={align=left,draw=none,column sep=2ex},legend entries={reference line with slope 1,reference line with slope 2}]
        \addlegendimage{dotted}
        \addlegendimage{dashdotted}
\end{customlegend}
\end{tikzpicture}

\begin{subfigure}[b]{0.5\linewidth}
\centering\captionsetup{width=.8\linewidth}%
\begin{tikzpicture}
\begin{axis}[%
    name=plot1,
    width=0.85\textwidth,
    height=0.15\textheight,
    axis x line=bottom,
    ymode = log,
    xmode = log,
    log basis y={2},
    log basis x={2},
    axis y line=left,
    title={$\omega=\sigma=1$},
    xlabel={$h$},
    ylabel={$|E(I(Y_{N})-1)|$}]

    \addplot[TDSLstyle] table[%
    x={h},
    y={e1_TDSL},
    ]{\ConvRigidBodyFF};

    \addplot[TFSLstyle] table[%
    x={h},
    y={e1_TFSL},
    ]{\ConvRigidBodyFF};

    \addplot [domain=0.00048828:0.1250,dotted] {x/256};
    \addplot [domain=0.00048828:0.1250,dashdotted] {x*x/256};

\end{axis}

\begin{axis}[%
    name=plot2,
    at=(plot1.below south west), anchor=above north west,
    width=0.85\textwidth,
    height=0.15\textheight,
    axis x line=bottom,
    ymode = log,
    xmode = log,
    log basis y={2},
    log basis x={2},
    axis y line=left,
    title={$\omega=\sigma=5$},
    xlabel={$h$},
    ylabel={$|E(I(Y_{N})-1)|$}]

    \addplot[TDSLstyle] table[%
    x={h},
    y={e1_TDSL},
    ]{\ConvRigidBodySS};

    \addplot[TFSLstyle] table[%
    x={h},
    y={e1_TFSL},
    ]{\ConvRigidBodySS};

    \addplot [domain=0.00048828:0.1250,dotted] {x/256};
    \addplot [domain=0.00048828:0.1250,dashdotted] {x*x/256};

\end{axis}

\begin{axis}[%
    name=plot3,
    at=(plot2.below south west), anchor=above north west,
    width=0.85\textwidth,
    height=0.15\textheight,
    axis x line=bottom,
    ymode = log,
    xmode = log,
    log basis y={2},
    log basis x={2},
    axis y line=left,
    title={$\omega=\sigma=10$},
    xlabel={$h$},
    ylabel={$|E(I(Y_{N})-1)|$}]

    \addplot[TDSLstyle] table[%
    x={h},
    y={e1_TDSL},
    ]{\ConvRigidBodyTT};

    \addplot[TFSLstyle] table[%
    x={h},
    y={e1_TFSL},
    ]{\ConvRigidBodyTT};

    \addplot [domain=0.00048828:0.1250,dotted] {x/256};
    \addplot [domain=0.00048828:0.1250,dashdotted] {x*x/256};

\end{axis}
\end{tikzpicture}
\caption{\mbox{}}
\end{subfigure}%
\hspace*{\fill}
\begin{subfigure}[b]{0.5\linewidth}
\centering\captionsetup{width=.8\linewidth}%
\begin{tikzpicture}
\begin{axis}[%
    name=plot1,
    width=0.85\textwidth,
    height=0.15\textheight,
    axis x line=bottom,
    ymode = log,
    xmode = log,
    log basis y={2},
    log basis x={2},
    axis y line=left,
    title={$\omega=0$, $\sigma=0.3$},
    xlabel={$h$},
    ylabel={$|E(I(Y_{N})-1)|$}]

    \addplot[TFSLstyle] table[%
    x={h},
    y={e1_TFSL},
    ]{\ConvRigidBodyF};

    \addplot [domain=0.00048828:0.1250,dashdotted] {x*x/256};

\end{axis}
\begin{axis}[%
    name=plot2,
    at=(plot1.below south west), anchor=above north west,
    width=0.85\textwidth,
    height=0.15\textheight,
    axis x line=bottom,
    ymode = log,
    xmode = log,
    log basis y={2},
    log basis x={2},
    axis y line=left,
    title={$\omega=10$, $\sigma=0.3$},
    xlabel={$h$},
    ylabel={$|E(I(Y_{N})-1)|$}]

    \addplot[TDSLstyle] table[%
    x={h},
    y={e1_TDSL},
    ]{\ConvRigidBodyS};

    \addplot[TFSLstyle] table[%
    x={h},
    y={e1_TFSL},
    ]{\ConvRigidBodyS};

    \addplot [domain=0.00048828:0.1250,dotted] {x/256};
    \addplot [domain=0.00048828:0.1250,dashdotted] {x*x/256};

\end{axis}
\begin{axis}[%
    name=plot3,
    at=(plot2.below south west), anchor=above north west,
    width=0.85\textwidth,
    height=0.15\textheight,
    axis x line=bottom,
    ymode = log,
    xmode = log,
    log basis y={2},
    log basis x={2},
    axis y line=left,
    title={$\omega=100$, $\sigma=0.3$},
    xlabel={$h$},
    ylabel={$|E(I(Y_{N})-1)|$}]

    \addplot[TDSLstyle] table[%
    x={h},
    y={e1_TDSL},
    ]{\ConvRigidBodyT};

    \addplot[TFSLstyle] table[%
    x={h},
    y={e1_TFSL},
    ]{\ConvRigidBodyT};

    \addplot [domain=0.00048828:0.1250,dotted] {x/256};
    \addplot [domain=0.00048828:0.1250,dashdotted] {x*x/256};

\end{axis}

\end{tikzpicture}
\caption{\mbox{}}
\end{subfigure}%
\caption{{The stochastic rigid body problem: Weak error of $I$ vs.\  step size for different values of
$\sigma$ and $\omega$.}}\label{fig:RigidBodyGlobalWeakError}
\end{figure}

We will next see how well the methods preserve invariants, also over long time integration.
In the case of the stochastic rigid body problem \cref{equ:rigidbody},
\cref{ass:ABskew,ass:SkewSym} are satisfied, and by \cref{lem:InvSDE} the exact solution of \cref{equ:rigidbody} preserves the invariant
\begin{equation}\label{eq:Invrigitbody}
    \mathcal{I}(X(t)) = X(t)^{\top}  X(t)
\end{equation}
and thus, when using the given initial values, stays  on the unit sphere.
The matrices $A_0$ and $A_1$ commute, so the requirements of \cref{cor:QInvMidpoint} are satisfied
and the MDSL and MFSL schemes should preserve the invariant.
As $g_1(x)=0$ the requirements for
\cref{cor:TrapzruleQinvariant} are satisfied, and the TFSL scheme {nearly} preserves the
invariant, and the weak order with respect to $I$ is 2. This is clearly demonstrated in
\cref{fig:RigidBodyGlobalWeakError}, the weak order is one for the TDSL scheme, and two for the TFSL
scheme. For the small noise problem, the error from the drift term
  dominates, and the order, in particular for larger values of $h$ is then close to 2. The midpoint schemes {preserve} the invariant exact, and {are} not included in the figure.

Lastly, as $g_0(x)\neq 0$, Theorem 3.1 in
\citet{cohen12otn} does not apply, and we expect the TDSL scheme to drift off. {To confirm this, two extreme cases from the experiments above have been chosen, that is}
$\omega=\sigma=10$, and $\omega=100$ and $\sigma=0.3$. The SDE is solved on the interval
$[0,100]$, using the {step size} $h=25/2^{10}$. \Cref{fig:InvRigidBody} shows the value of the
invariant $\I(Y_n)$. For visibility, only each 128th step is plotted. In
\cref{fig:InvRigidBodyzoom}, we zoom in onto the interval $[60,70]$, for better visibility
neglecting the TFSL method and for the midpoint methods plotting only every 6th value.  Notice that the drift of the TDSL schemes is
severe in the experiments with much noise, while in the case of small noise, it is reasonably
small.

\begin{figure}[ht!]
\centering
\pgfplotsset{
    legend image with TFSL MDSL MFSL Midpoint overlaid/.style={
        legend image code/.code={%
            \LegendImage{/pgfplots/TFSLstyle}
            \LegendImage{/pgfplots/MFSLstyle}
            \LegendImage{/pgfplots/MDSLstyle}
             \LegendImage{/pgfplots/Mstyle}
        }
    },
}

\begin{tikzpicture}
\begin{customlegend}[legend columns=5,legend style={align=left,draw=none,column sep=2ex},legend entries={TDSL\\{TFSL, MDSL, MFSL, Midpoint} {(not distinguishable on these plots)}\\}]
        \addlegendimage{TDSLstyle}
        \addlegendimage{legend image with TFSL MDSL MFSL Midpoint overlaid}
\end{customlegend}
\end{tikzpicture}
\begin{tikzpicture}
\pgfplotstableread[col sep=comma]{Data/RigidBody/StochasticRigidBody_OuterIter=1_Miter=1_Mbatch=1_TDSL_TFSL_M_MDSL_MFSL_tend=100Param=10_10_meanvalues_grid_1_coarse=128.csv}\RigidTenTen
\begin{axis}[%
    name=plot1,
    width=0.9\linewidth,
    height=0.135\textheight,
    axis x line=bottom,
    axis y line=left,
    title={$\omega= \sigma=10$},
    xlabel={$t_n$},
    ylabel={$\I(Y_n)$},
    legend pos=north west,
    scaled y ticks = false,
    log ticks with fixed point,
    yticklabel style={/pgf/number format/.cd,fixed,precision=2, fixed zerofill},]

    \addplot[TDSLstyle] table[x={t}, y expr={(\thisrow{I1_TDSL})}]{\RigidTenTen};

    \addplot[TFSLstyle] table[x={t}, y expr={(\thisrow{I1_TFSL})}]{\RigidTenTen};

   \addplot[MDSLstyle] table[x={t}, y expr={(\thisrow{I1_MDSL})}]{\RigidTenTen};

    \addplot[MFSLstyle] table[x={t}, y expr={(\thisrow{I1_MFSL})}]{\RigidTenTen};

    \addplot[Mstyle] table[x={t}, y expr={(\thisrow{I1_M})}]{\RigidTenTen};
\end{axis}
\pgfplotstableread[col sep=comma]{Data/RigidBody/StochasticRigidBody_OuterIter=1_Miter=1_Mbatch=1_TDSL_TFSL_M_MDSL_MFSL_tend=100Param=100_0.3_meanvalues_grid_1_coarse=128.csv}\RigidHundredZeroThree
\begin{axis}[%
    name=plot2,
    at=(plot1.below south west), anchor=above north west,
    width=0.9\linewidth,
    height=0.135\textheight,
    axis x line=bottom,
    axis y line=left,
    title={$\omega=100$, $\sigma=0.3$},
    xlabel={$t_n$},
    ylabel={$\I(Y_n)$},
    legend pos=north west,
    scaled y ticks = false,
    log ticks with fixed point,
    yticklabel style={/pgf/number format/.cd,fixed,precision=4, fixed zerofill},]

    \addplot[TDSLstyle] table[x={t}, y expr={(\thisrow{I1_TDSL})}]{\RigidHundredZeroThree};

    \addplot[TFSLstyle] table[x={t}, y expr={(\thisrow{I1_TFSL})}]{\RigidHundredZeroThree};

   \addplot[MDSLstyle] table[x={t}, y expr={(\thisrow{I1_MDSL})}]{\RigidHundredZeroThree};

    \addplot[MFSLstyle] table[x={t}, y expr={(\thisrow{I1_MFSL})}]{\RigidHundredZeroThree};

    \addplot[Mstyle] table[x={t}, y expr={(\thisrow{I1_M})}]{\RigidHundredZeroThree};
\end{axis}

    \end{tikzpicture}
    \caption{The stochastic rigid body problem: The invariant $\I(Y_n)$.} \label{fig:InvRigidBody}
\end{figure}

\begin{figure}[ht!]
\centering
\begin{tikzpicture}

\begin{customlegend}[legend columns=5,legend style={align=left,draw=none,column sep=2ex},legend entries={TFSL\\{MDSL, MFSL, Midpoint} {(not distinguishable on these plots)}\\}]
        \addlegendimage{TFSLstyle,mark=none}
        \addlegendimage{legend image with MDSL MFSL Midpoint overlaid}
\end{customlegend}
\end{tikzpicture}
\begin{tikzpicture}
\pgfplotstableread[col sep=comma]{Data/RigidBody/StochasticRigidBody_OuterIter=1_Miter=1_Mbatch=1_TDSL_TFSL_M_MDSL_MFSL_tend=100Param=10_10_meanvalues_grid_1_60_70.csv}\RigidTenTen
\begin{axis}[%
    name=plot1,
    width=0.9\linewidth,
    height=0.135\textheight,
    axis x line=bottom,
    axis y line=left,
    ymax=1.00002,
    title={$\omega= \sigma=10$},
    xlabel={$t_n$},
    ylabel={$\I(Y_n)$},
    legend pos=north west,
    scaled y ticks = false,
    log ticks with fixed point,
    yticklabel style={/pgf/number format/.cd,fixed,precision=6, fixed zerofill},]

    \addplot[TFSLstyle,mark=none] table[x={t}, y expr={(\thisrow{I1_TFSL})}]{\RigidTenTen};

   \addplot[MDSLstyle,each nth point={6}] table[x={t}, y expr={(\thisrow{I1_MDSL})}]{\RigidTenTen};

    \addplot[MFSLstyle,each nth point={6}] table[x={t}, y expr={(\thisrow{I1_MFSL})}]{\RigidTenTen};

    \addplot[Mstyle,each nth point={6}] table[x={t}, y expr={(\thisrow{I1_M})}]{\RigidTenTen};
\end{axis}
\pgfplotstableread[col sep=comma]{Data/RigidBody/StochasticRigidBody_OuterIter=1_Miter=1_Mbatch=1_TDSL_TFSL_M_MDSL_MFSL_tend=100Param=100_0.3_meanvalues_grid_1_60_70.csv}\RigidHundredZeroThree
\begin{axis}[%
    name=plot2,
    at=(plot1.below south west), anchor=above north west,
    width=0.9\linewidth,
    height=0.135\textheight,
    axis x line=bottom,
    axis y line=left,
    title={$\omega=100$, $\sigma=0.3$},
    xlabel={$t_n$},
    ylabel={$\I(Y_n)$},
    legend pos=north west,
    scaled y ticks = false,
    log ticks with fixed point,
    yticklabel style={/pgf/number format/.cd,fixed,precision=6, fixed zerofill},]

    \addplot[TFSLstyle,mark=none] table[x={t}, y expr={(\thisrow{I1_TFSL})}]{\RigidHundredZeroThree};

   \addplot[MDSLstyle,each nth point={6}] table[x={t}, y expr={(\thisrow{I1_MDSL})}]{\RigidHundredZeroThree};

    \addplot[MFSLstyle,each nth point={6}] table[x={t}, y expr={(\thisrow{I1_MFSL})}]{\RigidHundredZeroThree};

    \addplot[Mstyle,each nth point={6}] table[x={t}, y expr={(\thisrow{I1_M})}]{\RigidHundredZeroThree};
\end{axis}

    \end{tikzpicture}
    \caption{The stochastic rigid body problem: The invariant $\I(Y_n)$.} \label{fig:InvRigidBodyzoom}
\end{figure}

To this aim, the stochastic rigid problem is solved by the
TFSL scheme over the interval $[0,100]$ for different step sizes, with 1000 independent
simulations. \cref{fig:AlmostPreserve} shows the maximum average deviation over the interval,
clearly demonstrating the second order of the deviation predicted by
\cref{cor:TrapzruleQinvariant}. The parameters used in this experiment are $\omega=10$ and
$\sigma=0.3$.

\begin{figure}[ht!]
\begin{tikzpicture}
\pgfplotstableread[col sep=comma]{Data/RigidBody/StochasticRigidBody_OuterIter=1_Miter=40_Mbatch=25_TFSL_tend=100_Strong_Param=10_0.3TrajectoryMax.csv}\TFSLNearPreserve
\begin{axis}[%
    name=plot1,
    width=0.9\linewidth,
    height=0.15\textheight,
    axis x line=bottom,
    ymode = log,
    xmode = log,
    log basis y={2},
    log basis x={2},
    xtick = { 0.125,0.0625,0.03125,0.015625,0.0078125,0.00390625,0.001953125,0.0009765625,0.00048828125},
    axis y line=left,
    legend pos=north west,
    xlabel={$h$},
    ylabel={$\max_{n}\E|\I(Y_n)-1|$}]

    \addplot[TFSLstyle,only marks] table[x={h},y={e1_TFSL}]{\TFSLNearPreserve};
	\addplot[TFSLstyle,mark=none,forget plot] table[x={h}, y={create col/linear regression={y=e1_TFSL}}]{\TFSLNearPreserve};
    	\xdef\slopeTFSL{\pgfplotstableregressiona}
		\addlegendentry{TFSL, $p=\pgfmathprintnumber{\slopeTFSL}$}
\end{axis}
\end{tikzpicture}
\caption{The stochastic rigid body problem with $\omega=10$ and $\sigma=0.3$: Maximum average deviation on the interval [0,100] vs.\  step size when solved by the TFSL method. {$p$ denotes the numerically determined order of convergence.}} \label{fig:AlmostPreserve}
\end{figure}

\subsection{Non-linear Kubo oscillator}
We next study the methods applied to a problem with multiple noise terms. To this aim,
the non-linear Kubo oscillator, already introduced in \cref{sec:intro}, is chosen. The SDE is
\begin{equation}\label{equ:NonLinKuboa}
\dX(t) = \sum_{m=0}^M \left[ \omega_m \begin{pmatrix}
0 & -1 \\ 1 & 0
\end{pmatrix} X(t) + \begin{pmatrix}
0 & -U_m(X(t)) \\ U_m(X(t)) & 0
\end{pmatrix} X(t) \right] \circ \dW_m(t)
\end{equation}
with $U_m:\mathbb{R}^2 \to \mathbb{R}$. In the current experiments, $M=2$ and the nonlinear terms
are
\begin{equation*}
	U_0(X) = \frac{1}{5}(X_1 + X_2)^5, ~ U_1(X)=0, ~ U_2(X)=\frac{1}{3}(X_1+X_2)^3,~ \omega_0=
	\omega, ~ \omega_1=\sigma, ~ \omega_2=0,
\end{equation*}
thus there are two independent diffusion terms, one linear and one non-linear.
These are scaled to ensure that the highly oscillatory parts come from the linear terms. As in the
rigid body case, $\omega$ and $\sigma$ vary to investigate how the schemes respond to highly
oscillatory parts in the drift and / or diffusion terms.
The experiment setup is as for the stochastic rigid problem:
\begin{enumerate}[(a)]
\item Equal contribution from drift and diffusion: The parameters used in the experiments are  $\omega=\sigma\in\{1,5,10\}$,
\item Small linear diffusion: The parameters are  $\sigma=0.3$, and $\omega\in\{1,10,50\}$.
\end{enumerate}

The integration interval is [0,1], {$X(0)=(1,0)^\top$}, and the SDE is solved with step sizes
$\log_2(h)\in\{-15,\ldots,-8$\}, and with 1000 independent simulations. The reference solutions are
computed by the  MFSL scheme using $h_{\text{ref}}=2^{-21}$. The {strong} error at the end
point is shown in \cref{fig:KuboConv}. The $95\%$-confidence intervals have been calculated and span
in all cases less than $15\%$ of the corresponding error values.

\pgfplotstableread[col sep=comma]{Data/Kubo/Non-linearKubooscillator_OuterIter=1_Miter=40_Mbatch=25_Nh=21_TDSL_TFSL_M_MDSL_MFSL_tend=1_Strong_Param=1_1Endpoint.csv}\convergenceOsc
\pgfplotstableread[col sep=comma]{Data/Kubo/Non-linearKubooscillator_OuterIter=1_Miter=40_Mbatch=25_Nh=21_TDSL_TFSL_M_MDSL_MFSL_tend=1_Strong_Param=5_5Endpoint.csv}\convergenceMedOsc
\pgfplotstableread[col sep=comma]{Data/Kubo/Non-linearKubooscillator_OuterIter=1_Miter=40_Mbatch=25_Nh=21_TDSL_TFSL_M_MDSL_MFSL_tend=1_Strong_Param=10_10Endpoint.csv}\convergence

\pgfplotstableread[col sep=comma]{Data/Kubo/Non-linearKubooscillator_OuterIter=1_Miter=40_Mbatch=25_Nh=21_TDSL_TFSL_M_MDSL_MFSL_tend=1_Strong_Param=1_0.3Endpoint.csv}\convergenceOscNew
\pgfplotstableread[col sep=comma]{Data/Kubo/Non-linearKubooscillator_OuterIter=1_Miter=40_Mbatch=25_Nh=21_TDSL_TFSL_M_MDSL_MFSL_tend=1_Strong_Param=10_0.3Endpoint.csv}\convergenceMedOscNew
\pgfplotstableread[col sep=comma]{Data/Kubo/Non-linearKubooscillator_OuterIter=1_Miter=40_Mbatch=25_Nh=21_TDSL_TFSL_M_MDSL_MFSL_tend=1_Strong_Param=50_0.3Endpoint.csv}\convergenceNew
\begin{figure}[ht!]
\centering
\begin{tikzpicture}
\centering
\begin{customlegend}[legend columns=5,legend style={align=left,draw=none,column sep=2ex},legend entries={TDSL, TFSL, MDSL, MFSL, Midpoint}]
        \addlegendimage{TDSLstyle}
        \addlegendimage{TFSLstyle}
        \addlegendimage{MDSLstyle}
        \addlegendimage{MFSLstyle}
        \addlegendimage{Mstyle}
\end{customlegend}
\end{tikzpicture}
\centering
\begin{tikzpicture}
\begin{customlegend}[legend columns=2,legend style={align=left,draw=none,column sep=2ex},legend entries={{reference line with slope 0.5},{reference line with slope 1}}]
        \addlegendimage{dashed}
        \addlegendimage{dotted}
\end{customlegend}
\end{tikzpicture}

\begin{subfigure}[b]{0.5\linewidth}
\centering\captionsetup{width=.8\linewidth}%
\begin{tikzpicture}
\begin{axis}[%
    name=plot1,
    width=0.95\linewidth,
    height=0.15\textheight,
    axis x line=bottom,
    ymode = log,
    xmode = log,
    log basis y={2},
    log basis x={2},
    xtick ={data},
    axis y line=left,
    title={$\omega=\sigma=1$},
    xlabel={$h$},
    ylabel={$E\|X(t_N)-Y_{N}\|$}]

    \addplot[TDSLstyle] table[%
    x={h},
    y={e1_TDSL},
    ]{\convergenceOsc};

    \addplot[TFSLstyle] table[%
    x={h},
    y={e1_TFSL},
    ]{\convergenceOsc};

    \addplot[MDSLstyle] table[%
    x={h},
    y={e1_MDSL},
    ]{\convergenceOsc};

    \addplot[MFSLstyle] table[%
    x={h},
    y={e1_MFSL},
    ]{\convergenceOsc};

    \addplot[Mstyle] table[%
    x={h},
    y={e1_M},
    ]{\convergenceOsc};

    \addplot [domain=0.0000305:0.0039,dashed] {sqrt(x/2)};

\end{axis}

\begin{axis}[%
    name=plot2,
    at=(plot1.below south west), anchor=above north west,
    width=0.95\linewidth,
    height=0.15\textheight,
    axis x line=bottom,
    ymode = log,
    xmode = log,
    log basis y={2},
    log basis x={2},
    xtick ={data},
    axis y line=left,
    title={$\omega=\sigma=5$},
    xlabel={$h$},
    ylabel={$E\|X(t_N)-Y_{N}\|$}]

    \addplot[TDSLstyle] table[%
    x={h},
    y={e1_TDSL},
    ]{\convergenceMedOsc};

    \addplot[TFSLstyle] table[%
    x={h},
    y={e1_TFSL},
    ]{\convergenceMedOsc};

    \addplot[MDSLstyle] table[%
    x={h},
    y={e1_MDSL},
    ]{\convergenceMedOsc};

    \addplot[MFSLstyle] table[%
    x={h},
    y={e1_MFSL},
    ]{\convergenceMedOsc};

    \addplot[Mstyle] table[%
    x={h},
    y={e1_M},
    ]{\convergenceMedOsc};

    \addplot [domain=0.0000305:0.0039,dashed] {sqrt(x)};
\addplot [domain=0.0000305:0.0039,dotted] {256*x};

\end{axis}

\begin{axis}[%
    name=plot3,
    at=(plot2.below south west), anchor=above north west,
    width=0.95\linewidth,
    height=0.15\textheight,
    axis x line=bottom,
    ymode = log,
    xmode = log,
    log basis y={2},
    log basis x={2},
    xtick ={data},
    axis y line=left,
    title={$\omega=\sigma=10$},
    xlabel={$h$},
    ylabel={$E\|X(t_N)-Y_{N}\|$}]

    \addplot[TDSLstyle] table[%
    x={h},
    y={e1_TDSL},
    ]{\convergence};

    \addplot[TFSLstyle] table[%
    x={h},
    y={e1_TFSL},
    ]{\convergence};

    \addplot[MDSLstyle] table[%
    x={h},
    y={e1_MDSL},
    ]{\convergence};

    \addplot[MFSLstyle] table[%
    x={h},
    y={e1_MFSL},
    ]{\convergence};

    \addplot[Mstyle] table[%
    x={h},
    y={e1_M},
    ]{\convergence};

    \addplot [domain=0.0000305:0.0039,dashed] {2*sqrt(x)};
    \addplot [domain=0.0000305:0.0039,dotted] {1024*x};

\end{axis}
\end{tikzpicture}
\caption{\mbox{}} \label{fig:KuboConvHH}
\end{subfigure}%
\hspace*{\fill}
\begin{subfigure}[b]{0.5\linewidth}
\centering\captionsetup{width=.8\linewidth}%
\begin{tikzpicture}
\begin{axis}[%
    name=plot1,
    width=0.95\linewidth,
    height=0.15\textheight,
    axis x line=bottom,
    ymode = log,
    xmode = log,
    log basis y={2},
    log basis x={2},
    xtick ={data},
    axis y line=left,
    title={$\omega=1$, $\sigma=0.3$},
    xlabel={$h$},
    ylabel={$E\|X(t_N)-Y_{N}\|$}]

    \addplot[TDSLstyle] table[%
    x={h},
    y={e1_TDSL},
    ]{\convergenceOscNew};

    \addplot[TFSLstyle] table[%
    x={h},
    y={e1_TFSL},
    ]{\convergenceOscNew};

    \addplot[MDSLstyle] table[%
    x={h},
    y={e1_MDSL},
    ]{\convergenceOscNew};

    \addplot[MFSLstyle] table[%
    x={h},
    y={e1_MFSL},
    ]{\convergenceOscNew};

    \addplot[Mstyle] table[%
    x={h},
    y={e1_M},
    ]{\convergenceOscNew};

    \addplot [domain=0.0000305:0.0039,dashed] {sqrt(x/32)};

\end{axis}

\begin{axis}[%
    name=plot2,
    at=(plot1.below south west), anchor=above north west,
    width=0.95\linewidth,
    height=0.15\textheight,
    axis x line=bottom,
    ymode = log,
    xmode = log,
    log basis y={2},
    log basis x={2},
    xtick ={data},
    axis y line=left,
    title={$\omega=10$, $\sigma=0.3$},
    xlabel={$h$},
    ylabel={$E\|X(t_N)-Y_{N}\|$}]

    \addplot[TDSLstyle] table[%
    x={h},
    y={e1_TDSL},
    ]{\convergenceMedOscNew};

    \addplot[TFSLstyle] table[%
    x={h},
    y={e1_TFSL},
    ]{\convergenceMedOscNew};

    \addplot[MDSLstyle] table[%
    x={h},
    y={e1_MDSL},
    ]{\convergenceMedOscNew};

    \addplot[MFSLstyle] table[%
    x={h},
    y={e1_MFSL},
    ]{\convergenceMedOscNew};

    \addplot[Mstyle] table[%
    x={h},
    y={e1_M},
    ]{\convergenceMedOscNew};

\addplot [domain=0.0000305:0.0039,dashed] {sqrt(x)/16};
\addplot [domain=0.0000305:0.0039,dotted] {16*x};

\end{axis}

\begin{axis}[%
    name=plot3,
    at=(plot2.below south west), anchor=above north west,
    width=0.95\linewidth,
    height=0.15\textheight,
    axis x line=bottom,
    ymode = log,
    xmode = log,
    log basis y={2},
    log basis x={2},
    xtick ={data},
    axis y line=left,
    title={$\omega=50$, $\sigma=0.3$},
    xlabel={$h$},
    ylabel={$E\|X(t_N)-Y_{N}\|$}]

    \addplot[TDSLstyle] table[%
    x={h},
    y={e1_TDSL},
    ]{\convergenceNew};

    \addplot[TFSLstyle] table[%
    x={h},
    y={e1_TFSL},
    ]{\convergenceNew};

    \addplot[MDSLstyle] table[%
    x={h},
    y={e1_MDSL},
    ]{\convergenceNew};

    \addplot[MFSLstyle] table[%
    x={h},
    y={e1_MFSL},
    ]{\convergenceNew};

    \addplot[Mstyle] table[%
    x={h},
    y={e1_M},
    ]{\convergenceNew};

\addplot [domain=0.0000305:0.0039,dashed] {sqrt(x)/16};
\addplot [domain=0.0000305:0.0039,dotted] {32*x};

\end{axis}

\end{tikzpicture}
\caption{\mbox{}}\label{fig:KuboConvHL}
\end{subfigure}%
\caption{The Kubo oscillator: {Strong} error vs.\ step size for different values of $\sigma$ and
$\omega$. }\label{fig:KuboConv}
\end{figure}

\begin{figure}[ht!]
\centering
\begin{tikzpicture}
\centering
\begin{customlegend}[legend columns=5,legend style={align=left,draw=none,column sep=2ex},legend entries={TDSL, TFSL, MDSL, MFSL, Midpoint}]
        \addlegendimage{TDSLstyle}
        \addlegendimage{TFSLstyle}
        \addlegendimage{MDSLstyle}
        \addlegendimage{MFSLstyle}
        \addlegendimage{Mstyle}
\end{customlegend}
\end{tikzpicture}

\begin{subfigure}[b]{0.5\linewidth}
\centering\captionsetup{width=.8\linewidth}%
\begin{tikzpicture}
\begin{axis}[%
    name=plot1,
    width=0.95\linewidth,
    height=0.15\textheight,
    axis x line=bottom,
    ymode = log,
    xmode = log,
    log basis y={2},
    log basis x={2},
    axis y line=left,
    title={$\omega=\sigma=1$},
    ylabel={Wall-clock time $[s]$},
    xlabel={$E\|X(t_N)-Y_{N}\|$}]

    \addplot[TDSLstyle] table[%
    y={tTDSL},
    x={e1_TDSL},
    ]{\convergenceOsc};

    \addplot[TFSLstyle] table[%
    y={tTFSL},
    x={e1_TFSL},
    ]{\convergenceOsc};

    \addplot[MDSLstyle] table[%
    y={tMDSL},
    x={e1_MDSL},
    ]{\convergenceOsc};

    \addplot[MFSLstyle] table[%
    y={tMFSL},
    x={e1_MFSL},
    ]{\convergenceOsc};

    \addplot[Mstyle] table[%
    y={tM},
    x={e1_M},
    ]{\convergenceOsc};

\end{axis}

\begin{axis}[%
    name=plot2,
    at=(plot1.below south west), anchor=above north west,
    width=0.95\linewidth,
    height=0.15\textheight,
    axis x line=bottom,
    ymode = log,
    xmode = log,
    log basis y={2},
    log basis x={2},
    axis y line=left,
    title={$\omega=\sigma=5$},
    ylabel={Wall-clock time $[s]$},
    xlabel={$E\|X(t_N)-Y_{N}\|$}]

    \addplot[TDSLstyle] table[%
    y={tTDSL},
    x={e1_TDSL},
    ]{\convergenceMedOsc};

    \addplot[TFSLstyle] table[%
    y={tTFSL},
    x={e1_TFSL},
    ]{\convergenceMedOsc};

    \addplot[MDSLstyle] table[%
    y={tMDSL},
    x={e1_MDSL},
    ]{\convergenceMedOsc};

    \addplot[MFSLstyle] table[%
    y={tMFSL},
    x={e1_MFSL},
    ]{\convergenceMedOsc};

    \addplot[Mstyle] table[%
    y={tM},
    x={e1_M},
    ]{\convergenceMedOsc};

\end{axis}

\begin{axis}[%
    name=plot3,
    at=(plot2.below south west), anchor=above north west,
    width=0.95\linewidth,
    height=0.15\textheight,
    axis x line=bottom,
    ymode = log,
    xmode = log,
    log basis y={2},
    log basis x={2},
    axis y line=left,
    title={$\omega=\sigma=10$},
    ylabel={Wall-clock time $[s]$},
    xlabel={$E\|X(t_N)-Y_{N}\|$}]

    \addplot[TDSLstyle] table[%
    y={tTDSL},
    x={e1_TDSL},
    ]{\convergence};

    \addplot[TFSLstyle] table[%
    y={tTFSL},
    x={e1_TFSL},
    ]{\convergence};

    \addplot[MDSLstyle] table[%
    y={tMDSL},
    x={e1_MDSL},
    ]{\convergence};

    \addplot[MFSLstyle] table[%
    y={tMFSL},
    x={e1_MFSL},
    ]{\convergence};

    \addplot[Mstyle] table[%
    y={tM},
    x={e1_M},
    ]{\convergence};

\end{axis}
\end{tikzpicture}
\caption{\mbox{}} \label{fig:KuboConvHHb}
\end{subfigure}%
\hspace*{\fill}
\begin{subfigure}[b]{0.5\linewidth}
\centering\captionsetup{width=.8\linewidth}%
\begin{tikzpicture}
\begin{axis}[%
    name=plot1,
    width=0.95\linewidth,
    height=0.15\textheight,
    axis x line=bottom,
    ymode = log,
    xmode = log,
    log basis y={2},
    log basis x={2},
    axis y line=left,
    title={$\omega=1$, $\sigma=0.3$},
    ylabel={Wall-clock time $[s]$},
    xlabel={$E\|X(t_N)-Y_{N}\|$}]

    \addplot[TDSLstyle] table[%
    y={tTDSL},
    x={e1_TDSL},
    ]{\convergenceOscNew};

    \addplot[TFSLstyle] table[%
    y={tTFSL},
    x={e1_TFSL},
    ]{\convergenceOscNew};

    \addplot[MDSLstyle] table[%
    y={tTDSL},
    x={e1_MDSL},
    ]{\convergenceOscNew};

    \addplot[MFSLstyle] table[%
    y={tMFSL},
    x={e1_MFSL},
    ]{\convergenceOscNew};

    \addplot[Mstyle] table[%
    y={tM},
    x={e1_M},
    ]{\convergenceOscNew};

\end{axis}

\begin{axis}[%
    name=plot2,
    at=(plot1.below south west), anchor=above north west,
    width=0.95\linewidth,
    height=0.15\textheight,
    axis x line=bottom,
    ymode = log,
    xmode = log,
    log basis y={2},
    log basis x={2},
    axis y line=left,
    title={$\omega=10$, $\sigma=0.3$},
    ylabel={Wall-clock time $[s]$},
    xlabel={$E\|X(t_N)-Y_{N}\|$}]

    \addplot[TDSLstyle] table[%
    y={tTDSL},
    x={e1_TDSL},
    ]{\convergenceMedOscNew};

    \addplot[TFSLstyle] table[%
    y={tTFSL},
    x={e1_TFSL},
    ]{\convergenceMedOscNew};

    \addplot[MDSLstyle] table[%
    y={tMDSL},
    x={e1_MDSL},
    ]{\convergenceMedOscNew};

    \addplot[MFSLstyle] table[%
    y={tMFSL},
    x={e1_MFSL},
    ]{\convergenceMedOscNew};

    \addplot[Mstyle] table[%
    y={tM},
    x={e1_M},
    ]{\convergenceMedOscNew};

\end{axis}

\begin{axis}[%
    name=plot3,
    at=(plot2.below south west), anchor=above north west,
    width=0.95\linewidth,
    height=0.15\textheight,
    axis x line=bottom,
    ymode = log,
    xmode = log,
    log basis y={2},
    log basis x={2},
    axis y line=left,
    title={$\omega=50$, $\sigma=0.3$},
    ylabel={Wall-clock time $[s]$},
    xlabel={$E\|X(t_N)-Y_{N}\|$}]

    \addplot[TDSLstyle] table[%
    y={tTDSL},
    x={e1_TDSL},
    ]{\convergenceNew};

    \addplot[TFSLstyle] table[%
    y={tTFSL},
    x={e1_TFSL},
    ]{\convergenceNew};

    \addplot[MDSLstyle] table[%
    y={tMDSL},
    x={e1_MDSL},
    ]{\convergenceNew};

    \addplot[MFSLstyle] table[%
    y={tMFSL},
    x={e1_MFSL},
    ]{\convergenceNew};

    \addplot[Mstyle] table[%
    y={tM},
    x={e1_M},
    ]{\convergenceNew};

\end{axis}

\end{tikzpicture}
\caption{\mbox{}}
\end{subfigure}%
\caption{{The Kubo oscillator: Wall-clock time per batch of 25 paths vs.\ accuracy for different values of $\sigma$ and
$\omega$. }}\label{fig:EfficiencyKubo}
\end{figure}

The case in which $\sigma=\omega$ is presented in \cref{fig:KuboConvHH}. Again, there are no
significant differences between the methods for $\sigma=\omega=1$ and the {strong}
order is 0.5, as expected. For higher values of $\sigma$ and $\omega$, the advantage of the
FSL methods for larger step sizes is evident. In the cases of the midpoint and the DSL
schemes, the error is dominated by the linear stochastic diffusion term, thus causing the first
order behaviour. This term is incorporated in the exponentials of the FSL schemes, and will there thus not
contribute to the errors. For problems with small linear noise contributions
in \cref{fig:KuboConv}, all the SL schemes outperform the midpoint scheme for the
oscillatory cases, when $\omega\in\{10,50\}$.

For step sizes larger {than} $h=2^{-8}$, the Newton
solver frequently failed to solve the underlying nonlinear algebraic equations, in particular for
the TDSL scheme.
{The computational efforts depicted in \cref{fig:EfficiencyKubo} again demonstrate that there
are only small  differences in execution time between the methods, thus the more accurate methods are
also the most efficient in terms of wall-clock terms vs. accuracy.}

\pgfplotstableread[col sep=comma]{Data/KuboWeak/Non-linearKubooscillator_OuterIter=1_Miter=20_Mbatch=2500_TDSL_TFSL_M_MDSL_MFSL_tend=1_Weak_Param=1_1TrajectoryMax.csv}\convergenceOsc
\pgfplotstableread[col sep=comma]{Data/KuboWeak/Non-linearKubooscillator_OuterIter=1_Miter=20_Mbatch=2500_TDSL_TFSL_M_MDSL_MFSL_tend=1_Weak_Param=5_5TrajectoryMax.csv}\convergenceMedOsc
\pgfplotstableread[col sep=comma]{Data/KuboWeak/Non-linearKubooscillator_OuterIter=1_Miter=20_Mbatch=2500_TDSL_TFSL_M_MDSL_MFSL_tend=1_Weak_Param=10_10TrajectoryMax.csv}\convergence

\pgfplotstableread[col sep=comma]{Data/KuboWeak/Non-linearKubooscillator_OuterIter=1_Miter=20_Mbatch=2500_TDSL_TFSL_M_MDSL_MFSL_tend=1_Weak_Param=1_0.3TrajectoryMax.csv}\convergenceOscNew
\pgfplotstableread[col sep=comma]{Data/KuboWeak/Non-linearKubooscillator_OuterIter=1_Miter=20_Mbatch=2500_TDSL_TFSL_M_MDSL_MFSL_tend=1_Weak_Param=10_0.3TrajectoryMax.csv}\convergenceMedOscNew
\pgfplotstableread[col sep=comma]{Data/KuboWeak/Non-linearKubooscillator_OuterIter=1_Miter=20_Mbatch=2500_TDSL_TFSL_M_MDSL_MFSL_tend=1_Weak_Param=50_0.3TrajectoryMax.csv}\convergenceNew

\begin{figure}[ht!]
\centering
\begin{tikzpicture}
\centering
\begin{customlegend}[legend columns=5,legend style={align=left,draw=none,column sep=2ex},legend entries={TDSL, TFSL, MDSL, MFSL, Midpoint}]
        \addlegendimage{TDSLstyle}
        \addlegendimage{TFSLstyle}
\end{customlegend}
\end{tikzpicture}
\centering
\begin{tikzpicture}
\begin{customlegend}[legend columns=2,legend style={align=left,draw=none,column sep=2ex},legend entries={reference line with slope 1}]
        \addlegendimage{dotted}
\end{customlegend}
\end{tikzpicture}

\begin{subfigure}[b]{0.5\linewidth}
\centering\captionsetup{width=.8\linewidth}%
\begin{tikzpicture}
\begin{axis}[%
    name=plot1,
    width=0.95\linewidth,
    height=0.15\textheight,
    axis x line=bottom,
    ymode = log,
    xmode = log,
    log basis y={2},
    log basis x={2},
    xtick ={data},
    axis y line=left,
    title={$\omega=\sigma=1$},
    xlabel={$h$},
    ylabel={$|E(I(Y_{N})-1)|$}]

    \addplot[TDSLstyle] table[%
    x={h},
    y={e1_TDSL},
    ]{\convergenceOsc};

    \addplot[TFSLstyle] table[%
    x={h},
    y={e1_TFSL},
    ]{\convergenceOsc};
\addplot [domain=0.0000305:0.0039,dotted] {x/2};
\end{axis}

\begin{axis}[%
    name=plot2,
    at=(plot1.below south west), anchor=above north west,
    width=0.95\linewidth,
    height=0.15\textheight,
    axis x line=bottom,
    ymode = log,
    xmode = log,
    log basis y={2},
    log basis x={2},
    xtick ={data},
    axis y line=left,
    title={$\omega=\sigma=5$},
    xlabel={$h$},
    ylabel={$|E(I(Y_{N})-1)|$}]

    \addplot[TDSLstyle] table[%
    x={h},
    y={e1_TDSL},
    ]{\convergenceMedOsc};

    \addplot[TFSLstyle] table[%
    x={h},
    y={e1_TFSL},
    ]{\convergenceMedOsc};

\addplot [domain=0.0000305:0.0039,dotted] {x/2};

\end{axis}

\begin{axis}[%
    name=plot3,
    at=(plot2.below south west), anchor=above north west,
    width=0.95\linewidth,
    height=0.15\textheight,
    axis x line=bottom,
    ymode = log,
    xmode = log,
    log basis y={2},
    log basis x={2},
    xtick ={data},
    axis y line=left,
    title={$\omega=\sigma=10$},
    xlabel={$h$},
    ylabel={$|E(I(Y_{N})-1)|$}]

    \addplot[TDSLstyle] table[%
    x={h},
    y={e1_TDSL},
    ]{\convergence};

    \addplot[TFSLstyle] table[%
    x={h},
    y={e1_TFSL},
    ]{\convergence};
    \addplot [domain=0.0000305:0.0039,dotted] {2*x};

\end{axis}
\end{tikzpicture}
\caption{\mbox{}}
\end{subfigure}%
\hspace*{\fill}
\begin{subfigure}[b]{0.5\linewidth}
\centering\captionsetup{width=.8\linewidth}%
\begin{tikzpicture}
\begin{axis}[%
    name=plot1,
    width=0.95\linewidth,
    height=0.15\textheight,
    axis x line=bottom,
    ymode = log,
    xmode = log,
    log basis y={2},
    log basis x={2},
    xtick ={data},
    axis y line=left,
    title={$\omega=1$, $\sigma=0.3$},
    xlabel={$h$},
    ylabel={$|E(I(Y_{N})-1)|$}]

    \addplot[TDSLstyle] table[%
    x={h},
    y={e1_TDSL},
    ]{\convergenceOscNew};

    \addplot[TFSLstyle] table[%
    x={h},
    y={e1_TFSL},
    ]{\convergenceOscNew};
\addplot [domain=0.0000305:0.0039,dotted] {x/2};
\end{axis}

\begin{axis}[%
    name=plot2,
    at=(plot1.below south west), anchor=above north west,
    width=0.95\linewidth,
    height=0.15\textheight,
    axis x line=bottom,
    ymode = log,
    xmode = log,
    log basis y={2},
    log basis x={2},
    xtick ={data},
    axis y line=left,
    title={$\omega=10$, $\sigma=0.3$},
    xlabel={$h$},
    ylabel={$|E(I(Y_{N})-1)|$}]

    \addplot[TDSLstyle] table[%
    x={h},
    y={e1_TDSL},
    ]{\convergenceMedOscNew};

    \addplot[TFSLstyle] table[%
    x={h},
    y={e1_TFSL},
    ]{\convergenceMedOscNew};

\addplot [domain=0.0000305:0.0039,dotted] {x};

\end{axis}

\begin{axis}[%
    name=plot3,
    at=(plot2.below south west), anchor=above north west,
    width=0.95\linewidth,
    height=0.15\textheight,
    axis x line=bottom,
    ymode = log,
    xmode = log,
    log basis y={2},
    log basis x={2},
    xtick ={data},
    axis y line=left,
    title={$\omega=50$, $\sigma=0.3$},
    xlabel={$h$},
    ylabel={$|E(I(Y_{N})-1)|$}]

    \addplot[TDSLstyle] table[%
    x={h},
    y={e1_TDSL},
    ]{\convergenceNew};

    \addplot[TFSLstyle] table[%
    x={h},
    y={e1_TFSL},
    ]{\convergenceNew};

\addplot [domain=0.0000305:0.0039,dotted] {x};

\end{axis}

\end{tikzpicture}
\caption{\mbox{}}
\end{subfigure}%
\caption{{The Kubo oscillator: Weak error of $I$ vs.\ step size for different values of $\sigma$ and
$\omega$.}}\label{fig:KuboWeakConv}
\end{figure}

We next investigate how well the numerical solution preserves quadratic invariants.
\cref{ass:ABskew,ass:SkewSym} are satisfied, and by \cref{lem:InvSDE}  the exact solution of
\cref{equ:NonLinKuboa} is norm-preserving, thus
\begin{equation}
    \mathcal{I}(X(t)) = X(t)^{\top} X(t)
\end{equation}
is constant, and the solution will  stay on a circle.
The matrices $A_0$, $A_1$ and $A_2$ trivially commute; thus \cref{cor:QInvMidpoint} applies and
all the midpoint based methods are expected to preserve the invariant. As $g_2(x)\neq 0$,  \cref{cor:TrapzruleQinvariant}
no longer applies. {We thus expect to see a weak order one for the two trapezoidal SL schemes,
which is exactly what is observed in \cref{fig:KuboWeakConv}.}

{Finally, we want to study how well the quadratic invariants are preserved over time.}
Using each of the schemes, one solution path is calculated on the interval $[0,50]$, using step size
  $h = 2^{-5}$. The following two sets of parameters are chosen:  $\omega=\sigma=10$ and $\omega=100$
and $\sigma=0.3$. The values of $\I(Y_n)$ are shown in \cref{fig:kuboInv}. Again, for visibility,
only every 40th step is plotted.

\pgfplotstableread[col sep=comma]{Data/Kubo/h2-5Omega10Sigma10Start.csv}\MultiDKubo
\pgfplotstableread[col sep=comma]{Data/Kubo/h2-5Omega50Sigma03Start.csv}\MultiDKuboCohen

\begin{figure}[ht!]
\centering
\begin{tikzpicture}
\begin{customlegend}[legend columns=3,legend style={align=left,draw=none,column sep=1ex},legend entries={TDSL\\TFSL\\{MDSL, MFSL, Midpoint {(not distinguishable on these plots)}}\\}]
        \addlegendimage{TDSLstyle}
        \addlegendimage{TFSLstyle}
        \addlegendimage{legend image with MDSL MFSL Midpoint overlaid}
\end{customlegend}
\end{tikzpicture}
\begin{tikzpicture}
\begin{semilogyaxis}[%
    name=plot1,
    width=0.9\linewidth,
    height=0.15\textheight,
    axis x line=bottom,
    axis y line=left,
    ymax=100,
    title={$\omega=\sigma=10$},
    xlabel={$t_n$},
    ylabel={$\mathcal{I}(Y_n)$}]

    \addplot[TDSLstyle] table[x={t}, y expr={(\thisrow{TDSL1}*\thisrow{TDSL1} + \thisrow{TDSL2}*\thisrow{TDSL2})}]{\MultiDKubo};
    \addplot[TFSLstyle] table[x={t}, y expr={(\thisrow{TFSL1}*\thisrow{TFSL1} + \thisrow{TFSL2}*\thisrow{TFSL2})}]{\MultiDKubo};
    \addplot[MDSLstyle] table[x={t}, y expr={(\thisrow{MDSL1}*\thisrow{MDSL1} + \thisrow{MDSL2}*\thisrow{MDSL2})}]{\MultiDKubo};
    \addplot[MFSLstyle] table[x={t}, y expr={(\thisrow{MFSL1}*\thisrow{MFSL1} + \thisrow{MFSL2}*\thisrow{MFSL2})}]{\MultiDKubo};
    \addplot[Mstyle] table[x={t}, y expr={(\thisrow{Midpoint1}*\thisrow{Midpoint1} + \thisrow{Midpoint2}*\thisrow{Midpoint2})}]{\MultiDKubo};
\end{semilogyaxis}

\begin{axis}[%
    name=plot2,
    at=(plot1.below south west), anchor=above north west,
    width=0.9\linewidth,
    height=0.15\textheight,
    axis x line=bottom,
    axis y line=left,
    title={$\omega=50$, $\sigma=0.3$},
    xlabel={$t_n$},
    ylabel={$\mathcal{I}(Y_n)$},
    scaled y ticks = false,
    log ticks with fixed point,
    yticklabel style={/pgf/number format/.cd,fixed,precision=1, fixed zerofill},]

    \addplot[TDSLstyle] table[x={t}, y expr={(\thisrow{TDSL1}*\thisrow{TDSL1} + \thisrow{TDSL2}*\thisrow{TDSL2})}]{\MultiDKuboCohen};
    \addplot[TFSLstyle] table[x={t}, y expr={(\thisrow{TFSL1}*\thisrow{TFSL1} + \thisrow{TFSL2}*\thisrow{TFSL2})}]{\MultiDKuboCohen};
    \addplot[MDSLstyle] table[x={t}, y expr={(\thisrow{MDSL1}*\thisrow{MDSL1} + \thisrow{MDSL2}*\thisrow{MDSL2})}]{\MultiDKuboCohen};
    \addplot[MFSLstyle] table[x={t}, y expr={(\thisrow{MFSL1}*\thisrow{MFSL1} + \thisrow{MFSL2}*\thisrow{MFSL2})}]{\MultiDKuboCohen};
    \addplot[Mstyle] table[x={t}, y expr={(\thisrow{Midpoint1}*\thisrow{Midpoint1} + \thisrow{Midpoint2}*\thisrow{Midpoint2})}]{\MultiDKuboCohen};
\end{axis}
\end{tikzpicture}

\caption{The non-linear Kubo oscillator: Evaluation of the invariant $\I(Y_n)$} \label{fig:kuboInv}
\end{figure}

In both cases, the mid-point based schemes preserve the invariant, while there is a significant
drift-off for the two trapezoidal based schemes. The case $\omega=\sigma=10$ is the example
depicted in \cref{fig:IntroExample}, although there, the integration interval is only $[0,1]$.
In \cref{fig:kuboInv}, where the integration is done over longer time, the drift-off leaves the TDSL
scheme basically useless. The midpoint based methods preserve the invariant.
In the low noise case, the drift-off is almost the same for the two
trapezoidal schemes, and again, there is none or little drift-off in the midpoint schemes.

\subsection{Stochastic Fermi-Pasta-Ulam-Tsingou problem}
Finally, we would like to test the SL schemes by applying them to a more complex highly oscillatory
problem. To this aim, a stochastic modification of the famous deterministic Fermi-Pasta-Ulam-Tsingou (FPUT) problem as described in
\citet[Chapter I.5]{hairer06gni} is considered. The deterministic problem is defined by the Hamiltonian

\begin{equation}\label{equ:FPUTHamil}
\begin{multlined}
    \mathcal{I}_0(x,y) = \sum_{m=1}^{M} J_m  + \frac{1}{4} \left((x_{0,1}-x_{1,1})^4 +
    (x_{0,M}+x_{1,M})^4 \right) \\
    + \frac{1}{4}\sum_{m=1}^{M-1} \left(x_{0,m+1}-x_{1,m+1}-x_{0,m}-x_{1,m}\right)^4
    \end{multlined}
\end{equation}
where
\[
J_m=\frac{1}{2} \left(y_{0,m}^2 + y_{1,m}^2\right)+
    \frac{\omega^2}{2}  x_{1,m}^2.
\]
In our example, stochastic noise of strength $\sigma_mJ_m$ is added to each of the springs. The resulting system is a $4M$
dimensional SDE:
\begin{align*}
  dx_{0,m} &= y_{0,m}\dt + \sigma_m y_{0,m}\circ \dW_m,  \\
  dx_{1,m} &= y_{1,m} \dt + \sigma_m y_{1,m}\circ \dW_m, \\
  dy_{0,m} &= (- g_m + g_{m+1})\dt, \\
  dy_{1,m} &= (-\omega^2 x_{1,m}  + g_{m} + g_{m+1})\dt -\sigma_m \omega^2 x_{1,m} \circ \dW_m,
\end{align*}
for $m=1,\ldots,M$  where
\begin{align*}
  g_1 &= (x_{0,1}-x_{1,1})^3,\\
  g_m &= (x_{0,m}-x_{1,m}-x_{0,m-1}-x_{1,m-1})^3, \qquad m=2,\ldots,M, \\
  g_{M+1} &= -(x_{0,M}+x_{1,M})^3.
\end{align*}
Let $X={\big(}(x_{0,m})_{m=1}^M,(x_{1,m})_{m=0}^M,(y_{0,m})_{m=1}^M,(y_{1,m})_{m=0}^M{\big)}^\top$.
The complete system can be written as a semi-linear SDE
\begin{equation} \label[SDE]{equ:FPU_SDE}
  \dX = (A_0+g_0)\dt + \sum_{m=1}^M A_m X \circ \dW_m
\end{equation}
where $A_m$ are $4M\times4M$ block matrices given by
\begin{equation}
  A_m = \begin{pmatrix}
  0 & 0 & D_m & 0 \\
  0 & 0 & 0 & D_m \\
  0 & 0 & 0 & 0 \\
  0 & -\omega^2D_m & 0 & 0
  \end{pmatrix}
  \label{equ:fpu_matrix}
\end{equation}
where $D_m = \sigma_m
\text{diag}\{0,\dotsc,1,\dotsc,0\}$, with the nonzero element in position $m$, for $m=1,\ldots,M$, and $D_0=I_M$, the identity matrix of dimension $M\times M$.
The matrices $A_m$ satisfy \cref{ass:commute}, so the SL schemes are applicable. {For the same
reason, the expected {strong} order is 1 for all methods considered.}
\cref{ass:ABskew,ass:SkewSym} are not satisfied, so there is no quadratic invariant to preserve.

For the simulations we have used $M=3$, $\omega=50$, $\sigma_m$ {$=\sigma$}
for all $m$, {with $\sigma\in \{0.02, 0.2\}$}, and
initial values
\[ x_{0,1}(0)=y_{0,1}(0)=1, \quad x_{1,1}(0)=\omega^{-1}, \quad  y_{1,1}(0)=1.  \]
The remaining initial values are all zero.
Once again, we measure the {strong} error based on 1000 simulations. The SDE is solved over the interval $[0,1]$, and the
reference solutions are computed by {the} MFSL scheme with $h_{\text{ref}}=2^{-17}$. The results are
presented in \cref{fig:ConvFPUT}. The $95\%$-confidence intervals have been calculated and in all
cases they span less than {$14\%$} of the shown mean values.

\pgfplotstableread[col sep=comma]{Data/FPUT/Fermi-Pasta-Ulam-Tsingouproblem-Anne_OuterIter=1_Miter=40_Mbatch=25_Nh=17_TDSL_TFSL_M_MDSL_MFSL_tend=1_Strong_Param=50_0.02Endpoint.csv}\ConvFPUT

\pgfplotstableread[col sep=comma]{Data/FPUT/Fermi-Pasta-Ulam-Tsingouproblem-Anne_OuterIter=1_Miter=40_Mbatch=25_Nh=17_TDSL_TFSL_M_MDSL_MFSL_tend=1_Strong_Param=50_0.2Endpoint.csv}\ConvFPUTSigmanullto

\begin{figure}[ht!]
\begin{tikzpicture}

\begin{customlegend}[legend columns=5,legend style={align=left,draw=none,column sep=2ex},legend entries={TDSL, TFSL, MDSL, MFSL, Midpoint}]
        \addlegendimage{TDSLstyle}
        \addlegendimage{TFSLstyle}
        \addlegendimage{MDSLstyle}
        \addlegendimage{MFSLstyle}
        \addlegendimage{Mstyle}
\end{customlegend}
\end{tikzpicture}
\centering
\begin{tikzpicture}
\begin{customlegend}[legend columns=1,legend style={align=left,draw=none,column sep=2ex},legend entries={{reference line with slope 1}}]
        \addlegendimage{dotted}
\end{customlegend}
\end{tikzpicture}
\begin{subfigure}[b]{0.5\linewidth}
\centering
\begin{tikzpicture}
\begin{axis}[%
    name=plot1,
    width=0.9\textwidth,
    height=0.15\textheight,
    axis x line=bottom,
    ymode = log,
    xmode = log,
    log basis y={2},
    log basis x={2},
    xtick ={data},
    ytick ={0.5,0.0625,0.0078125,0.0009765625,0.0001220703125,0.00001525878906},
    axis y line=left,
    xlabel={$h$},
    ylabel={$E\|X(T)-Y_{N}\|$}]

    \addplot[TDSLstyle] table[%
    x={h},
    y={e1_TDSL},
    ]{\ConvFPUT};

    \addplot[TFSLstyle] table[%
    x={h},
    y={e1_TFSL},
    ]{\ConvFPUT};

    \addplot[MDSLstyle] table[%
    x={h},
    y={e1_MDSL},
    ]{\ConvFPUT};

    \addplot[MFSLstyle] table[%
    x={h},
    y={e1_MFSL},
    ]{\ConvFPUT};

    \addplot[Mstyle] table[%
    x={h},
    y={e1_M},
    ]{\ConvFPUT};

    \addplot [domain=0.00048828:0.1250,dotted] {2*x};

\end{axis}

\end{tikzpicture}
\caption{{$\sigma=0.02$}}
\end{subfigure}%
\begin{subfigure}[b]{0.5\linewidth}
\begin{tikzpicture}
\begin{axis}[%
    name=plot2,
    width=0.9\textwidth,
    height=0.15\textheight,
    axis x line=bottom,
    ymode = log,
    xmode = log,
    log basis y={2},
    log basis x={2},
    xtick ={data},
    ytick ={0.5,0.0625,0.0078125,0.0009765625,0.0001220703125,0.00001525878906},
    axis y line=left,
    xlabel={{$h$}},
    ylabel={{$E\|X(T)-Y_{N}\|$}}]

    \addplot[TDSLstyle] table[%
    x={h},
    y={e1_TDSL},
    ]{\ConvFPUTSigmanullto};

    \addplot[TFSLstyle] table[%
    x={h},
    y={e1_TFSL},
    ]{\ConvFPUTSigmanullto};

    \addplot[MDSLstyle] table[%
    x={h},
    y={e1_MDSL},
    ]{\ConvFPUTSigmanullto};

    \addplot[MFSLstyle] table[%
    x={h},
    y={e1_MFSL},
    ]{\ConvFPUTSigmanullto};

    \addplot[Mstyle] table[%
    x={h},
    y={e1_M},
    ]{\ConvFPUTSigmanullto};

    \addplot [domain=0.00048828:0.1250,dotted] {4*x};

\end{axis}

\end{tikzpicture}
\caption{{$\sigma=0.2$}}
\end{subfigure}
    \caption{Convergence of the TDSL, TFSL, MDSL, MFSL and implicit midpoint rule for the stochastic Fermi-Pasta-Ulam-Tsingou problem} \label{fig:ConvFPUT}
\end{figure}

{In all cases the Lawson schemes perform significantly better than the implicit midpoint rule.
For small values of $\sigma$ the two FSL schemes are only slightly better than the two DSL
schemes. For larger values of $\sigma$, the FSL schemes outperform the DSL
schemes. This is in line with what was observed for the Kubo oscillator in \cref{fig:KuboConv}. But
even in the small noise case,} the feasible step sizes of the Lawson schemes are larger
than the feasible step sizes for the implicit midpoint rule - the convergence deteriorates for the
implicit midpoint rule approximately when $h=2^{-6}$, whereas the convergence of the Lawson schemes
only deteriorates around $h=2^{-3}$, the SL schemes thus allowing to use much larger step sizes than
the implicit midpoint rule. {For larger values of $\sigma$, the midpoint rule is in practice useless
for the step sizes applied here.}

\begin{figure}[ht!]
\centering
\begin{tikzpicture}

\begin{customlegend}[legend columns=5,legend style={align=left,draw=none,column sep=2ex},legend entries={TDSL, TFSL, MDSL, MFSL, Midpoint}]
        \addlegendimage{TDSLstyle}
        \addlegendimage{TFSLstyle}
        \addlegendimage{MDSLstyle}
        \addlegendimage{MFSLstyle}
        \addlegendimage{Mstyle}
\end{customlegend}
\end{tikzpicture}
\begin{subfigure}[b]{0.5\linewidth}
\centering
\begin{tikzpicture}
\begin{axis}[%
    name=plot1,
    width=0.9\textwidth,
    height=0.15\textheight,
    axis x line=bottom,
    ymode = log,
    xmode = log,
    log basis y={2},
    log basis x={2},
    xtick ={0.5,0.0625,0.0078125,0.0009765625,0.0001220703125,0.00001525878906},
    axis y line=left,
    ylabel={Wall-clock time $[s]$},
    xlabel={$E\|X(T)-Y_{N}\|$}]

    \addplot[TDSLstyle] table[%
    y={tTDSL},
    x={e1_TDSL},
    ]{\ConvFPUT};

    \addplot[TFSLstyle] table[%
    y={tTFSL},
    x={e1_TFSL},
    ]{\ConvFPUT};

    \addplot[MDSLstyle] table[%
    y={tMDSL},
    x={e1_MDSL},
    ]{\ConvFPUT};

    \addplot[MFSLstyle] table[%
    y={tMFSL},
    x={e1_MFSL},
    ]{\ConvFPUT};

    \addplot[Mstyle] table[%
    y={tM},
    x={e1_M},
    ]{\ConvFPUT};

\end{axis}

\end{tikzpicture}
\caption{$\sigma=0.02$}
\end{subfigure}%
\begin{subfigure}[b]{0.5\linewidth}
\centering
\begin{tikzpicture}
\begin{axis}[%
    name=plot1,
    width=0.9\textwidth,
    height=0.15\textheight,
    axis x line=bottom,
    ymode = log,
    xmode = log,
    log basis y={2},
    log basis x={2},
    xtick ={0.5,0.0625,0.0078125,0.0009765625,0.0001220703125,0.00001525878906},
    axis y line=left,
    ylabel={Wall-clock time $[s]$},
    xlabel={$E\|X(T)-Y_{N}\|$}]

    \addplot[TDSLstyle] table[%
    y={tTDSL},
    x={e1_TDSL},
    ]{\ConvFPUTSigmanullto};

    \addplot[TFSLstyle] table[%
    y={tTFSL},
    x={e1_TFSL},
    ]{\ConvFPUTSigmanullto};

    \addplot[MDSLstyle] table[%
    y={tMDSL},
    x={e1_MDSL},
    ]{\ConvFPUTSigmanullto};

    \addplot[MFSLstyle] table[%
    y={tMFSL},
    x={e1_MFSL},
    ]{\ConvFPUTSigmanullto};

    \addplot[Mstyle] table[%
    y={tM},
    x={e1_M},
    ]{\ConvFPUTSigmanullto};

\end{axis}

\end{tikzpicture}
\caption{$\sigma=0.2$}
\end{subfigure}
    \caption{{Wall-clock time per batch of 25 paths vs.\ accuracy of the TDSL, TFSL, MDSL, MFSL and implicit midpoint rule for the stochastic Fermi-Pasta-Ulam-Tsingou problem}} \label{fig:EfficiencyFPUT}
\end{figure}
{The work-precision diagram \cref{fig:EfficiencyFPUT} again demonstrates that even for this
  slighly larger problem the higher accuracy of the SL schemes, in particular the
  FSL schemes, {compensates} for the disadvantage of the additional computational work required for the
calculation of the matrix exponentials. In the current example, the TFSL method is slightly the most
efficient.}

\pgfplotstableread[col sep=comma]{Data/FPUT/Fermi-Pasta-Ulam-Tsingouproblem-Anne_OuterIter=1_Miter=40_Mbatch=1250_Nh=17_TDSL_TFSL_M_MDSL_MFSL_tend=1_Weak_Param=50_0.02TrajectoryMax.csv}\ConvFPUT

\pgfplotstableread[col
sep=comma]{Data/FPUT/Fermi-Pasta-Ulam-Tsingouproblem-Anne_OuterIter=80_Miter=20_Mbatch=1250_Nh=17_TDSL_TFSL_M_MDSL_MFSL_tend=1_Weak_Param=50_0.2TrajectoryMax.csv}\ConvFPUTb
\begin{figure}[ht!]
\centering
\begin{tikzpicture}

\begin{customlegend}[legend columns=5,legend style={align=left,draw=none,column sep=2ex},legend entries={TDSL, TFSL, MDSL, MFSL, Midpoint}]
        \addlegendimage{TDSLstyle}
        \addlegendimage{TFSLstyle}
        \addlegendimage{MDSLstyle}
        \addlegendimage{MFSLstyle}
        \addlegendimage{Mstyle}
\end{customlegend}
\end{tikzpicture}
\centering
\begin{tikzpicture}
\begin{customlegend}[legend columns=2,legend style={align=left,draw=none,column sep=2ex},legend entries={reference line with slope 1, reference line with slope 2}]
\addlegendimage{dotted}
        \addlegendimage{dashdotted}
\end{customlegend}
\end{tikzpicture}
\begin{subfigure}[b]{0.5\linewidth}
\centering\captionsetup{width=.8\linewidth}%
\begin{tikzpicture}
\begin{axis}[%
    name=plot1,
    width=0.9\textwidth,
    height=0.15\textheight,
    axis x line=bottom,
    ymode = log,
    xmode = log,
    log basis y={2},
    log basis x={2},
    xtick ={data},
    axis y line=left,
    xlabel={$h$},
    ylabel={$|E(I_1(X(T))-I_1(Y_{N}))|$}]

    \addplot[TDSLstyle] table[%
    x={h},
    y={e1_TDSL},
    ]{\ConvFPUT};

    \addplot[TFSLstyle] table[%
    x={h},
    y={e1_TFSL},
    ]{\ConvFPUT};

    \addplot[MDSLstyle] table[%
    x={h},
    y={e1_MDSL},
    ]{\ConvFPUT};

    \addplot[MFSLstyle] table[%
    x={h},
    y={e1_MFSL},
    ]{\ConvFPUT};

    \addplot[Mstyle] table[%
    x={h},
    y={e1_M},
    ]{\ConvFPUT};

    \addplot [domain=0.00048828:0.1250,dotted] {2*x};
    \addplot [domain=0.00048828:0.1250,dashdotted] {32*x*x};

\end{axis}

\begin{axis}[%
    name=plot2,
    at=(plot1.below south west), anchor=above north west,
    width=0.9\textwidth,
    height=0.15\textheight,
    axis x line=bottom,
    ymode = log,
    xmode = log,
    log basis y={2},
    log basis x={2},
    xtick ={data},
    axis y line=left,
    xlabel={$h$},
    ylabel={$|E(I_2(X(T))-I_2(Y_{N}))|$}]

    \addplot[TDSLstyle] table[%
    x={h},
    y={e2_TDSL},
    ]{\ConvFPUT};

    \addplot[TFSLstyle] table[%
    x={h},
    y={e2_TFSL},
    ]{\ConvFPUT};

    \addplot[MDSLstyle] table[%
    x={h},
    y={e2_MDSL},
    ]{\ConvFPUT};

    \addplot[MFSLstyle] table[%
    x={h},
    y={e2_MFSL},
    ]{\ConvFPUT};

    \addplot[Mstyle] table[%
    x={h},
    y={e2_M},
    ]{\ConvFPUT};

    \addplot [domain=0.00048828:0.1250,dotted] {2*x};
    \addplot [domain=0.00048828:0.1250,dashdotted] {32*x*x};

\end{axis}

\begin{axis}[%
    name=plot3,
    at=(plot2.below south west), anchor=above north west,
    width=0.9\textwidth,
    height=0.15\textheight,
    axis x line=bottom,
    ymode = log,
    xmode = log,
    log basis y={2},
    log basis x={2},
    xtick ={data},
    axis y line=left,
    xlabel={$h$},
    ylabel={$|E(I_3(X(T))-I_3(Y_{N}))|$}]

    \addplot[TDSLstyle] table[%
    x={h},
    y={e3_TDSL},
    ]{\ConvFPUT};

    \addplot[TFSLstyle] table[%
    x={h},
    y={e3_TFSL},
    ]{\ConvFPUT};

    \addplot[MDSLstyle] table[%
    x={h},
    y={e3_MDSL},
    ]{\ConvFPUT};

    \addplot[MFSLstyle] table[%
    x={h},
    y={e3_MFSL},
    ]{\ConvFPUT};

    \addplot[Mstyle] table[%
    x={h},
    y={e3_M},
    ]{\ConvFPUT};

    \addplot [domain=0.00048828:0.1250,dotted] {x/128};
    \addplot [domain=0.00048828:0.1250,dashdotted] {x*x/16};

\end{axis}

\begin{axis}[%
    name=plot4,
    at=(plot3.below south west), anchor=above north west,
    width=0.9\textwidth,
    height=0.15\textheight,
    axis x line=bottom,
    ymode = log,
    xmode = log,
    log basis y={2},
    log basis x={2},
    xtick ={data},
    axis y line=left,
    xlabel={$h$},
    ylabel={$|E(\It(X(T))-\It(Y_{N}))|$}]

    \addplot[TDSLstyle] table[%
    x={h},
    y={e4_TDSL},
    ]{\ConvFPUT};

    \addplot[TFSLstyle] table[%
    x={h},
    y={e4_TFSL},
    ]{\ConvFPUT};

    \addplot[MDSLstyle] table[%
    x={h},
    y={e4_MDSL},
    ]{\ConvFPUT};

    \addplot[MFSLstyle] table[%
    x={h},
    y={e4_MFSL},
    ]{\ConvFPUT};

    \addplot[Mstyle] table[%
    x={h},
    y={e4_M},
    ]{\ConvFPUT};

    \addplot [domain=0.00048828:0.1250,dotted] {2*x};
    \addplot [domain=0.00048828:0.1250,dashdotted] {32*x*x};

\end{axis}

\end{tikzpicture}
\caption{$\sigma=0.02$}
\end{subfigure}%
\hspace*{\fill}%
\begin{subfigure}[b]{0.5\linewidth}
\centering\captionsetup{width=.8\linewidth}%
\begin{tikzpicture}
\begin{axis}[%
    name=plot1,
    width=0.9\textwidth,
    height=0.15\textheight,
    axis x line=bottom,
    ymode = log,
    xmode = log,
    log basis y={2},
    log basis x={2},
    xtick ={data},
    axis y line=left,
    xlabel={$h$},
    ylabel={$|E(I_1(X(T))-I_1(Y_{N}))|$}]

    \addplot[TDSLstyle] table[%
    x={h},
    y={e1_TDSL},
    ]{\ConvFPUTb};

    \addplot[TFSLstyle] table[%
    x={h},
    y={e1_TFSL},
    ]{\ConvFPUTb};

    \addplot[MDSLstyle] table[%
    x={h},
    y={e1_MDSL},
    ]{\ConvFPUTb};

    \addplot[MFSLstyle] table[%
    x={h},
    y={e1_MFSL},
    ]{\ConvFPUTb};

    \addplot[Mstyle] table[%
    x={h},
    y={e1_M},
    ]{\ConvFPUTb};

    \addplot [domain=0.00048828:0.1250,dotted] {x/128};
    \addplot [domain=0.00048828:0.1250,dashdotted] {4*x*x};

\end{axis}
\begin{axis}[%
    name=plot2,
    at=(plot1.below south west), anchor=above north west,
    width=0.9\textwidth,
    height=0.15\textheight,
    axis x line=bottom,
    ymode = log,
    xmode = log,
    log basis y={2},
    log basis x={2},
    xtick ={data},
    axis y line=left,
    xlabel={$h$},
    ylabel={$|E(I_2(X(T))-I_2(Y_{N}))|$}]

    \addplot[TDSLstyle] table[%
    x={h},
    y={e2_TDSL},
    ]{\ConvFPUTb};

    \addplot[TFSLstyle] table[%
    x={h},
    y={e2_TFSL},
    ]{\ConvFPUTb};

    \addplot[MDSLstyle] table[%
    x={h},
    y={e2_MDSL},
    ]{\ConvFPUTb};

    \addplot[MFSLstyle] table[%
    x={h},
    y={e2_MFSL},
    ]{\ConvFPUTb};

    \addplot[Mstyle] table[%
    x={h},
    y={e2_M},
    ]{\ConvFPUTb};

    \addplot [domain=0.00048828:0.1250,dotted] {2*x};
    \addplot [domain=0.00048828:0.1250,dashdotted] {2*x*x};

\end{axis}

\begin{axis}[%
    name=plot3,
    at=(plot2.below south west), anchor=above north west,
    width=0.9\textwidth,
    height=0.15\textheight,
    axis x line=bottom,
    ymode = log,
    xmode = log,
    log basis y={2},
    log basis x={2},
    xtick ={data},
    axis y line=left,
    xlabel={$h$},
    ylabel={$|E(I_3(X(T))-I_3(Y_{N}))|$}]

    \addplot[TDSLstyle] table[%
    x={h},
    y={e3_TDSL},
    ]{\ConvFPUTb};

    \addplot[TFSLstyle] table[%
    x={h},
    y={e3_TFSL},
    ]{\ConvFPUTb};

    \addplot[MDSLstyle] table[%
    x={h},
    y={e3_MDSL},
    ]{\ConvFPUTb};

    \addplot[MFSLstyle] table[%
    x={h},
    y={e3_MFSL},
    ]{\ConvFPUTb};

    \addplot[Mstyle] table[%
    x={h},
    y={e3_M},
    ]{\ConvFPUTb};

    \addplot [domain=0.00048828:0.1250,dotted] {x/128};
    \addplot [domain=0.00048828:0.1250,dashdotted] {x*x/32};

\end{axis}

\begin{axis}[%
    name=plot4,
    at=(plot3.below south west), anchor=above north west,
    width=0.9\textwidth,
    height=0.15\textheight,
    axis x line=bottom,
    ymode = log,
    xmode = log,
    log basis y={2},
    log basis x={2},
    xtick ={data},
    axis y line=left,
    xlabel={$h$},
    ylabel={$|E(\It(X(T))-\It(Y_{N}))|$}]

    \addplot[TDSLstyle] table[%
    x={h},
    y={e4_TDSL},
    ]{\ConvFPUTb};

    \addplot[TFSLstyle] table[%
    x={h},
    y={e4_TFSL},
    ]{\ConvFPUTb};

    \addplot[MDSLstyle] table[%
    x={h},
    y={e4_MDSL},
    ]{\ConvFPUTb};

    \addplot[MFSLstyle] table[%
    x={h},
    y={e4_MFSL},
    ]{\ConvFPUTb};

    \addplot[Mstyle] table[%
    x={h},
    y={e4_M},
    ]{\ConvFPUTb};

    \addplot [domain=0.00048828:0.1250,dotted] {2*x};
    \addplot [domain=0.00048828:0.1250,dashdotted] {32*x*x};

\end{axis}

\end{tikzpicture}
\caption{$\sigma=0.2$}
\end{subfigure}
    \caption{{Weak convergence of $I_1$, $I_2$, $I_3$ and $\It=I_1+I_2+I_3$ for the TDSL, TFSL, MDSL, MFSL and implicit midpoint
    rule for the stochastic Fermi-Pasta-Ulam-Tsingou problem}} \label{fig:ConvFPUTWeak}
\end{figure}

{Finally, we consider the weak convergence of the methods when applied to calculate the total oscillatory energy $\It=\sum_{j=1}^{M}I_j$ where the oscillatory energy of the $j$-th string is given by $I_j=\frac12(y_{1,j}^2+\omega^2x_{1,j}^2)$ for the FPUT problem,
  and the result is presented in \cref{fig:ConvFPUTWeak}. In the small noise case, when
  $\sigma=0.02$, the error is clearly dominated by the diffusion term, and the order of the method
  is close to the deterministic order two. Still, the FSL schemes are significantly more accurate
  than the midpoint rule, although the difference between those again is minuscule. When the noise level
  increases to $\sigma=0.2$, the midpoint and the DSL schemes perform almost the same, while the
  errors of the FSL schemes are significantly smaller. We also observe the order two  behaviour of
  the trapezoidal FSL scheme, although, as already mentioned, no quadratic invariants are conserved
  in this case. However, in the case where $g_m=0$ for $m=1,\dotsc,M$, the transformed system
  \cref{equ:autonomousSDE} is in fact an SDE with additive noise, for which the weak order of the
  trapezoidal rule is two, see e.\,g.\ \citet{milstein04snf}. In \cref{fig:EfficiencyFPUTWeak} we observe how
  this makes the trapezoidal FSL scheme the most efficient what concerns the weak error.
}

\begin{figure}[ht!]
\centering
\begin{tikzpicture}

\begin{customlegend}[legend columns=5,legend style={align=left,draw=none,column sep=2ex},legend entries={TDSL, TFSL, MDSL, MFSL, Midpoint}]
        \addlegendimage{TDSLstyle}
        \addlegendimage{TFSLstyle}
        \addlegendimage{MDSLstyle}
        \addlegendimage{MFSLstyle}
        \addlegendimage{Mstyle}
\end{customlegend}
\end{tikzpicture}
\begin{subfigure}[b]{0.5\linewidth}
\centering
\begin{tikzpicture}
\begin{axis}[%
    name=plot1,
    width=0.9\textwidth,
    height=0.15\textheight,
    axis x line=bottom,
    ymode = log,
    xmode = log,
    log basis y={2},
    log basis x={2},
    axis y line=left,
    ylabel={Wall-clock time $[s]$},
    xlabel={$|E(\It(X(T))-\It(Y_{N}))|$}]

    \addplot[TDSLstyle] table[%
    y={tTDSL},
    x={e4_TDSL},
    ]{\ConvFPUT};

    \addplot[TFSLstyle] table[%
    y={tTFSL},
    x={e4_TFSL},
    ]{\ConvFPUT};

    \addplot[MDSLstyle] table[%
    y={tMDSL},
    x={e4_MDSL},
    ]{\ConvFPUT};

    \addplot[MFSLstyle] table[%
    y={tMFSL},
    x={e4_MFSL},
    ]{\ConvFPUT};

    \addplot[Mstyle] table[%
    y={tM},
    x={e4_M},
    ]{\ConvFPUT};
\end{axis}

\end{tikzpicture}
\caption{$\sigma=0.02$}
\end{subfigure}%
\hspace*{\fill}
\begin{subfigure}[b]{0.5\linewidth}
\centering
\begin{tikzpicture}
\begin{axis}[%
    name=plot1,
    width=0.9\textwidth,
    height=0.15\textheight,
    axis x line=bottom,
    ymode = log,
    xmode = log,
    log basis y={2},
    log basis x={2},
    axis y line=left,
    ylabel={Wall-clock time $[s]$},
    xlabel={$|E(\It(X(T))-\It(Y_{N}))|$}]

    \addplot[TDSLstyle] table[%
    y={tTDSL},
    x={e4_TDSL},
    ]{\ConvFPUTb};

    \addplot[TFSLstyle] table[%
    y={tTFSL},
    x={e4_TFSL},
    ]{\ConvFPUTb};

    \addplot[MDSLstyle] table[%
    y={tMDSL},
    x={e4_MDSL},
    ]{\ConvFPUTb};

    \addplot[MFSLstyle] table[%
    y={tMFSL},
    x={e4_MFSL},
    ]{\ConvFPUTb};

    \addplot[Mstyle] table[%
    y={tM},
    x={e4_M},
    ]{\ConvFPUTb};

\end{axis}

\end{tikzpicture}

\caption{$\sigma=0.2$}
\end{subfigure}
    \caption{{Wall-clock time per batch of 1250 paths vs.\ weak error of $\It=I_1+I_2+I_3$ for the TDSL, TFSL, MDSL, MFSL and implicit midpoint rule for the stochastic Fermi-Pasta-Ulam-Tsingou problem}} \label{fig:EfficiencyFPUTWeak}
\end{figure}

\section{Conclusion}
In this paper, we proved that stochastic Lawson schemes under suitable conditions preserve linear and quadratic invariants. We proved that the trapezoidal stochastic Lawson scheme nearly preserves quadratic invariants if the diffusion terms are linear and fully included in the exponential. These results have been verified by numerical experiments.

For stochastic differential equations with highly oscillatory drift and diffusion we numerically
demonstrated that full stochastic Lawson schemes allow for larger step-sizes than standard schemes,
{in the sense of being able to resolve high oscillations}.
{All methods are implicit, and in our implementations, the cpu-time used for solving the
  nonlinear systems was dominant compared to the time used to evaluate the matrix exponentials, and in terms of accuracy
versus computational work, the more accurate methods were the most efficient. This, of course, depends
heavily on the problem at hand, for larger problems with more demanding matrix
exponential evaluations, the drift Lawson methods might turn out to be preferable.}

\section*{Acknowledgements}
Nicky Cordua Mattsson would like to thank the SDU e-Science centre for partially funding his PhD and
the Department of Mathematics at the Norwegian University of Science and Technology for kindly
hosting him during his visit. {The authors would like to thank the unknown referees for their very helpful
comments.}

\def\cprime{$'$} \providecommand{\de}[2]{#2}

\end{document}